\newcommand{\abs}[1]{\lvert#1\rvert}
\newcommand{\norm}[1]{\lVert#1\rVert}
\newcommand{\xdif}{{\rm d}}
\newcommand{\dx}{\; \xdif x}
\newcommand{\cD}{\ensuremath{\mathcal{D}}}
\newcommand{\cN}{\ensuremath{\mathcal{N}}}
\newcommand{\cU}{\ensuremath{U}}
\newcommand{\rmy}{\ensuremath{\mathrm{y}}}
\newcommand{\rmu}{\ensuremath{\mathrm{u}}}
\newcommand{\rmp}{\ensuremath{\mathrm{p}}}
\newcommand{\rmz}{\ensuremath{\mathrm{z}}}
\newcommand{\rmU}{\ensuremath{\mathrm{U}}}
\newcommand{\rmD}{\ensuremath{\mathrm{D}}}
\newcommand{\rmC}{\ensuremath{\mathrm{C}}}
\newcommand{\rmA}{\ensuremath{\mathrm{A}}}
\newcommand{\rmF}{\ensuremath{\mathrm{F}}}
\newcommand{\rmM}{\ensuremath{\mathrm{M}}}
\newcommand{\xR}{\ensuremath{\mathbb{R}}}
\newcommand{\xN}{\ensuremath{\mathbb{N}}}
\newcommand{\xK}{\ensuremath{\mathbb{K}}}
\newcommand{\xI}{\ensuremath{\mathbb{I}}}
\newcommand{\ru}{\ensuremath{r_u}}
\newcommand{\train}{\mathrm{train}}
\newcommand{\muref}{\ensuremath{\mu^\mathrm{ref}}}
\newcommand{\alphaa}{\ensuremath{\alpha(\mu)}}
\newcommand{\alphaaLB}{\ensuremath{\alpha_{\rm LB}(\mu)}}
\newcommand{\gammab}{\ensuremath{\gamma_b}}
\newcommand{\gammac}{\ensuremath{\gamma_c}}
\newcommand{\tolmin}{\mathrm{tol, min}}
\newcommand{\rmmax}{{\rm max}}
\newcommand{\rme}{\mathrm{e}}
\newcommand{\ryk}{\ensuremath{r_y^k}}
\newcommand{\rpk}{\ensuremath{r_p^k}}
\newcommand{\rykt}{\ensuremath{\tilde{r}_y^k}}
\newcommand{\rpkt}{\ensuremath{\tilde{r}_p^k}}
\newcommand{\rukt}{\ensuremath{\tilde{r}_u^k}}
\newcommand{\eyk}{\ensuremath{e_y^k}}
\newcommand{\epk}{\ensuremath{e_p^k}}
\newcommand{\euk}{\ensuremath{e_u^k}}
\newcommand{\eykm}{\ensuremath{e_y^{k-1}}}
\newcommand{\epkp}{\ensuremath{e_p^{k+1}}}
\newcommand{\yNoptk}{\ensuremath{y_N^{*,k}}}
\newcommand{\yNoptkm}{\ensuremath{y_N^{*,k-1}}}
\newcommand{\pNoptk}{\ensuremath{p_N^{*,k}}}
\newcommand{\pNoptkp}{\ensuremath{p_N^{*,k+1}}}
\newcommand{\uNoptk}{\ensuremath{u_N^{*,k}}}
\newcommand{\refeq}[1]{(\ref{#1})}
\DeclareMathOperator{\spn}{span}
\DeclareMathOperator*{\argmin}{arg\,min}
\newcommand{\cbb}[1]{{\color{black}#1}}
\begin{document}

\title{Reduced basis approximation and a~posteriori error bounds for 4D-Var data assimilation\thanks{This work was supported by the Excellence Initiative of the German federal and state governments and the German Research Foundation through Grant GSC 111.}}


\titlerunning{{RB approximation and error bounds for 4D-Var data assimilation}}

\author{Mark K\"{a}rcher \and S\'{e}bastien Boyaval \and
Martin A. Grepl \and Karen Veroy}


\institute{Mark K\"{a}rcher \at
		Aachen Institute for Advanced Study in Computational
		Engineering Science (AICES), RWTH Aachen University,
		Schinkelstra{\ss}e 2, 52062 Aachen, Germany \\
		\email{kaercher@aices.rwth-aachen.de}    
	\and
		S\'{e}bastien Boyaval \at
		Laboratoire d'hydraulique Saint-Venant
		(Ecole des Ponts ParisTech -- EDF R\& D -- CEREMA)
		Universit\'{e} Paris-Est, 6 quai Watier, 78401 Chatou Cedex, France 
		and INRIA Paris (team Matherials)
		\\ \email{sebastien.boyaval@enpc.fr}
	\and
	Martin A. Grepl \at
	Numerical Mathematics (IGPM), RWTH Aachen University, Templergraben 55, 52056 Aachen, Germany \\
	\email{grepl@rwth-aachen.de}
	\and
	Karen Veroy \at
	Aachen Institute for Advanced Study in Computational
	Engineering Science (AICES) \\ and Faculty of Civil Engineering, RWTH Aachen University,
	Schinkelstra{\ss}e 2, 52062 Aachen \\
	\email{veroy@aices.rwth-aachen.de}
}

\date{Received: date / Accepted: date}

\maketitle

\begin{abstract}
We propose a certified reduced basis approach for the strong- and weak-constraint four-dimensional variational (4D-Var) data assimilation problem for a parametrized PDE model. While the standard strong-constraint 4D-Var approach uses the given observational data to estimate only the unknown initial condition of the model, the weak-constraint 4D-Var formulation additionally provides an estimate for the model error and thus can deal with imperfect models. Since the model error is a distributed function in both space and time, the 4D-Var formulation leads to a large-scale optimization problem for every given parameter instance of the PDE model. To solve the problem efficiently, various reduced order approaches have therefore been proposed in the recent past. Here, we employ the reduced basis method to generate reduced order approximations for the state, adjoint, initial condition, and model error. Our main contribution is the development of efficiently computable \textit{a~posteriori} upper bounds for the error of the reduced basis approximation with respect to the underlying high-dimensional 4D-Var problem. Numerical results are conducted to test the validity of our approach.

\keywords{Variational data assimilation \and 4D-Var \and Strong-constraint 4D-Var \and Weak-constraint 4D-Var \and Reduced-order models\and Reduced basis method \and A posteriori error estimation \and PDE-constrained optimization \and Parameter estimation}

\end{abstract}



\section{Introduction}
\label{sec:intro}


The goal of four-dimensional variational (4D-Var) data assimilation is to estimate unknown control variables of a dynamical system --- classically the initial condition of the system --- that provide the best fit of the system outputs with observation data over a specific time interval~\cite{Courtier1997,DT1986,Lorenc1981,Lorenc1986,Sasaki1970}. The use of 4D-Var data assimilation is prevalent in oceanography~\cite{Bennett1993} and meteorology~\cite{Lynch2015}, where the dynamical system is described by partial differential equations (PDEs); see the recent texts~\cite{LSZ2015,RC2015} and references therein for variational data assimilation in general.  

We consider two variants of the 4D-Var problem.  In the traditional {\it strong-constraint} 4D-Var formulation, the model is assumed to be ``perfect'' and only the initial conditions serve as the (unknown) control variable. The {\it weak-constraint} 4D-Var formulation additionally accounts for an imperfect model in the traditional formulation by introducing and finding a forcing term to account for the model error.  In the weak-constraint case, the unknown initial condition \textit{and} unknown model-error forcing term thus serve as control variables; for various weak-constraint formulations see e.g.~\cite{Tremolet2006}.

The 4D-Var problem is usually cast as an optimization problem and has very close connections to optimal control theory~\cite{VH2006}. A cost functional is introduced consisting of two terms in the classical strong-constraint formulation: the first term penalizes the misfit between the (unknown) initial condition and its prior background information and the second term penalizes the distance between the predicted system outputs and the observation data. In the weak-constraint case, another term is added which penalizes the model-error forcing. The optimal estimate of the initial condition is then found by minimizing the cost functional subject to the governing equations of the dynamical system, i.e., the PDE. After discretization of the PDE using classical techniques such as finite elements or volumes, the 4D-Var problem results in a large-scale optimization problem which is typically very expensive to solve due to the high-dimensional state and control variable spaces and the associated computation of the cost functional, gradient, and possibly Hessian. Note that in the discretized weak-constraint formulation, the model-error forcing is also assumed to be spatially distributed and thus has approximately the same dimension as the state and initial condition. To lower the tremendous computational cost for solving the problem, an incremental approach has been proposed in~\cite{CTH1994}.

Another possibility to speed-up the solution process are reduced-order approaches which have been proposed successfully for the strong-constraint 4D-Var formulation in, for example, \cite{CZN+2007,DN2008,DAS2010,HK2006,RDB+2005,VH2006}. There are two kinds of 4D-Var reduced-order approaches in the literature: In the first approach~\cite{HK2006,RDB+2005,VH2006}, a reduced basis space is introduced, e.g. using empirical orthogonal functions, only for the control variable (initial condition). By limiting the search space to the reduced space, the optimization cost per iteration decreases and the convergence improves (at least during the first few iterations). In the second approach~\cite{CZN+2007,DN2008,DAS2010}, a reduced-order model for the system dynamics using proper orthogonal decomposition (POD) is additionally introduced. This leads to an additional speed-up and significant overall computational savings compared to reducing only the control space.  All of these approaches also consider adapting the basis during the optimization. However, to the best of our knowledge, \textit{a~posteriori} error bounds to assess the sub-optimality of the reduced-order 4D-Var solutions have not yet been developed.


In this paper, we develop efficiently evaluable \textit{a~posteriori} error bounds for reduced order solutions of the strong- and weak-constraint 4D-Var data assimilation problem. We consider the standard quadratic 4D-Var cost functional constrained by parametrized linear parabolic PDEs involving noisy observations in time. Our final goal is not only to recover the ``usual'' 4D-Var control variables, i.e. the initial condition and model-error forcing, but also the model parameters. \cbb{A preliminary improvement of the model itself before estimating the state can result in an improved state estimate, see e.g.~the application in~\cite{Habert2016}.} We thus obtain a bilevel optimization problem where the outer optimization stage is performed over the model parameters after an inner optimization stage identical to the standard 4D-Var setting, i.e., an optimization over control variables for given fixed model parameters. In this paper, we focus mainly on the inner optimization stage and propose \textit{a~posteriori} error bounds for the control variable.  Our main contributions are as follows:
\begin{itemize}
\item In Section~\ref{sec:strong}, we consider the strong-constraint 4D-Var formulation. We employ the reduced basis method to generate reduced order approximations for the solution of the parametrized 4D-Var problem, i.e., the state, adjoint, and control variables (i.e., the initial condition). We then propose an \textit{a~posteriori} error bound for the control variable that allows us to assess the error between the reduced-order 4D-Var solution and 4D-Var solution of the underlying high-dimensional FE approximation. 
\item In Section~\ref{sec:weak}, we extend the reduced basis approximation and \textit{a~posteriori} error estimation procedure from the strong- to the weak-constraint case. For simplicity of exposition, we consider the model-error forcing as the only unknown control variable in this section.
\item In Section~\ref{sec:comb}, we combine the results from the two previous sections and consider problems with  unknown initial condition {\it and} model-error forcing. 
\end{itemize}
\cbb{With the assumption of affine parameter dependence, the reduced-order 4D-Var problems and the \textit{a~posteriori} error bounds can be efficiently evaluated using an offline-online computational decomposition. Problems involving material parameters often naturally satisfy an affine parameter dependence, and even geometric parameters can often be treated after introducing suitable affine mappings onto a reference domain~\cite{RHP2008}. Furthermore, the dimension reduction as well as the \textit{a posteriori} error bound formulation presented in this paper still hold even for non-affine problems.  However, for non-affine problems the computations can no longer be decomposed into offline-online stages, and the online computational efficiency thus suffers. To address this issue, the non-affine case can be treated using the Empirical Interpolation Method (EIM) which replaces the non-affine terms using an affine approximation and thus allows to regain the online-computational efficiency; we refer the interested reader e.g.\ to \cite{BMN+2004,GMN+2007,MNP+2007}.}

We present numerical results for the strong- and weak-constraint setting in Section~\ref{sec:results}. We consider the dispersion of a pollutant governed by a convection-diffusion equation with a Taylor-Green vortex velocity field. Our goal is to recover the initial condition (in the strong-constraint case) or the model-error forcing (in the weak-constraint case) given noisy measurements of the pollutant concentration at five spatial locations over time. 

\cbb{We note that there is a close connection between the 4D-Var problem formulation and optimal control and that \textit{a posteriori} error bounds for reduced order solutions to optimal control problems have been developed previously. However, rigorous \textit{and} efficiently evaluable error bounds have been proposed mainly for elliptic problems~\cite{KG2012,Kaercher2017,NRM+2012}, whereas error bounds for parabolic optimal control problems are either not rigorous~\cite{Dede2010} or not (online-)efficient~\cite{TV2009}. The only exception for parabolic problems is~\cite{KG2014}, which only considers scalar time-dependent controls and is based on a pertubation argument, often resulting in a more conservative error bound~\cite{Kaercher2017}.}

Finally, we note that the reduced basis method has already been used in a para\-meterized-background data-weak approach to variational data assimilation in~\cite{MPP+2015,MPP+2015a}. However, this previous work considers the elliptic case and presents a relaxation of the 3D-Var setting, whereas we consider the time-dependent case using the classical 4D-Var formulation. Before introducing some preliminary definitions and assumptions in the following section, we do note that although we consider the 4D-Var problem here, our approach directly applies to the 3D-Var setting since the two are formally similar~\cite{Lynch2015}.

%
%
%
%
%

\section{Preliminaries}
\label{sec:pre}

In this section, we introduce the necessary ingredients and definitions for the subsequent discussion. The 4D-Var problem is usually cast in a fully discrete setting; we thus directly consider a spatial finite element (FE) and temporal finite difference (FD) discretization using the weak variational formulation. \cbb{We summarize the continuous formulation of the 4D-Var problem in Appendix~\ref{sec:cont}.}

Let $Y_\rme$ with $H^1_0(\Omega) \subset Y_\rme \subset H^1(\Omega)$ be a Hilbert space of functions over the bounded Lipschitz domain $\Omega \subset \xR^d$, $d \in \xN$, with boundary $\Gamma$. The inner product and induced norm associated with $Y_\rme$ are given by $(\cdot,\cdot)_Y$ and $\norm{\cdot}_Y = \sqrt{(\cdot,\cdot)_Y}$, respectively. We assume that the norm $\norm{\cdot}_Y$ is equivalent to the $H^1(\Omega)$-norm and denote the dual space of $Y_\rme$ by $Y_\rme'$. We also introduce the Hilbert space for the control, $U_\rme = L^2(\Omega)$, together with its inner product $(\cdot,\cdot)_U$, induced norm $\norm{\cdot}_U = \sqrt{(\cdot,\cdot)_U}$, and associated dual space $U_\rme'$. Furthermore, let $\cD \subset \xR^P$ be a prescribed $P$-dimensional compact set in which our $P$-tuple input parameter $\mu = (\mu_1,\ldots,\mu_{P})$ resides.

We divide the time interval $[0,T]$ with fixed final time $T$ into $K$ subintervals of equal length $\tau = \frac{T}{K}$ and define $t^k = k \, \tau, \ 0 \le k \le K$, and $\xK = \{ 1, \dots, K \}$. We also introduce two conforming finite element approximation spaces $Y \subset Y_\rme$ and $U \subset U_\rme$ of typically large dimension $\cN_Y = \dim(Y)$ and $\cN_\cU = \dim(\cU)$; note that $Y$ and $U$ shall inherit the inner product and norm from $Y_\rme$ and $U_\rme$, respectively.  We shall assume that the spaces $Y,\cU$ and the number of timesteps $K$ are large enough -- i.e.\ $Y$ and $U$ are sufficiently rich and the time-discretization sufficiently fine -- such that the FE-FD approximation guarantees a desired accuracy over the whole parameter domain $\cD$.

We next introduce the (for the sake of simplicity) parameter-independent bilinear forms $m(w,v) = (w,v)_{L^2(\Omega)}$ for all $w,v \in L^2(\Omega)$ and $b(\cdot,\cdot): U \times Y \to \xR$. We assume that $b(\cdot,\cdot)$ is continuous, i.e.
\begin{equation}
\label{b_cont}
	\gammab = \sup_{w \in U \setminus \{0\}} \sup_{v \in Y \setminus \{0\}} \frac{b(w,v)}{\norm{w}_{U} \norm{v}_{Y}} < \infty.
\end{equation}
We also introduce the parameter-dependent bilinear form $a(\cdot,\cdot;\mu): Y \times Y \to \xR$, which we assume to be continuous,
coercive,
\begin{equation}
	\label{a_coer}
	\alphaa = \inf_{v \in Y \setminus \{0\}} \frac{a(v,v;\mu)}{\norm{v}_Y^2}
	\ge \underline{\alpha} > 0 \quad \forall \mu \in \cD,
\end{equation}
and affinely parameter-dependent,
\begin{equation}
	\label{aff_par_a}
	a(w,v;\mu) = \sum_{q=1}^{Q_a} \Theta_a^q(\mu) \, a^q(w,v) \quad \forall w,v \in Y,
	\quad \forall \mu \in \cD,
\end{equation}
for some (preferably) small integer $Q_a$. Here, the coefficient functions $\Theta_a^q: \cD \rightarrow \xR$ are continuous and depend on $\mu$, but the continuous bilinear forms $a^q: Y \times Y \rightarrow \xR$ do \textit{not} depend on $\mu$. 

We also require the continuous linear functional $f(\cdot): Y \to \xR$ and the continuous and linear (observation) operator $C: Y \to D$, where $D$ is a suitable Hilbert space of observations with inner product $(\cdot,\cdot)_D$ and norm $\norm{\cdot}_D$. Although a more general setting is possible, we consider here the observation space $D = \xR^l$ and the observation operator given by $C \phi = (h_1(\phi), \dots, h_{\ell}(\phi))^T$, where $h_i \in Y'$ are linear output functionals. The continuity constant of the operator $C$ is given by
\begin{equation}
	\gammac = \sup_{v \in Y \setminus \{0\}} \frac{\norm{C v}_{D}}{\norm{v}_Y}.
\end{equation}

For the development of the \textit{a~posteriori} error bounds we assume that we have access to a positive lower bound  $\alphaaLB: \cD \to \xR_{+}$ for the coercivity constant $\alpha(\mu)$ defined in \eqref{a_coer} such that
\begin{equation}
\label{alphaLB}
0 < \underline{\alpha} \leq \alphaaLB \leq \alphaa \quad \forall \mu \in \mathcal{D}.
\end{equation}
We note that $\alphaaLB$ is used in the \textit{a~posteriori} error bound formulation to replace the actual coercivity constants. Whereas the constants $\gammab$ and $\gammac$ are parameter-independent and can thus be computed once offline, we require that the coercivity lower can be efficiently evaluated online, i.e., the computational cost is independent of the FE dimension $\cN$. Various recipes exist to obtain such bounds~\cite{HRS+2007,RHP2008}.

\section{Strong-constraint 4D-Var}
\label{sec:strong}

In this section, we consider the strong-constraint 4D-Var data assimilation problem. The extension to the weak-constraint case is considered in Section \ref{sec:weak}.

\subsection{Problem statement}
\label{sec:ps_strong}

For a given parameter $\mu \in \cD$, the classical 4D-Var problem can be stated as the minimization problem
\begin{gather}
\begin{aligned}
	\label{fe_strong}
	& \min_{y \in Y^K, \, u \in U} J(y,u;\mu)  \quad \text{s.t.} \quad y \in Y^K \quad\text{solves} \\ & m(y^k,v) + \tau \,
	a(y^k,v;\mu) = m(y^{k-1},v) + \tau f(v) \quad \forall v \in Y,
	\ \forall k \in \xK,
\end{aligned}
\end{gather}
with initial condition $m(y^0,v) = m(u,v)$ for all $v \in Y,$ and cost functional $J(\cdot,\cdot;\mu): Y^K \times U \to \xR$ given by 
\begin{equation}
\label{cost_fe_strong}
 J(y,u;\mu) = \frac{1}{2} \norm{u - u_d}_{U}^2	+ \frac{\tau}{2} \sum_{k=1}^K \norm{C y^k - z_d^k}^2_{D}.
\end{equation}
Here, $u_d \in U$ is the background state (also referred to as the prior), i.e., the best estimate of the true initial condition $u \in U$ prior to measurements being available, and $z_d^k \in D$, $k \in \xK$, is the given data, e.g., observed outputs. The first term in the cost functional penalizes the deviation of the initial condition from the background state, the second term penalizes the deviation of the predicted outputs from the given data/observed outputs. The relative weight of both terms is affected by the choice of the $(\cdot,\cdot)_U$ and $(\cdot,\cdot)_D$ inner products. Note that we use $u$ for the unknown control/initial condition to signify the similarity to optimal control and the notation $J(\cdot,\cdot;\mu)$ to indicate the implicit dependence of the cost functional $J$ on the parameter $\mu$ through the state $y$. However, to simplify the notation we often do not explicitly state the dependence of the state and control on the parameter, i.e., we use $y^k$ and $u$ instead of $y^k(\mu)$ and $u(\mu)$, respectively.

\cbb{We would like to point out that the first term in \eqref{cost_fe_strong} represents a Tichonov regularization of the cost functional and that the regularization parameter is ``hidden'' in the choice of the inner product. We refer to~\cite{Engl1996} for regularization of inverse problems in general and to~\cite{Puel2009} for Tichonov regularization in data assimilation. Furthermore, we note that the choice of the norm for the data misfit term depends on the characteristics of the noise and is inspired by Gaussian noise in this paper. Different noise characteristics may require a different choice of norm; we refer e.g.\ to \cite{Rao2017} for a discussion using $L_1$ and Huber norms instead of the $L_2$ norm. The approach presented in the following is restricted to the case of Gaussian noise.}

Employing a Lagrangian approach, we obtain the associated necessary, and in our setting sufficient, first-order optimality conditions: Given $\mu \in \cD$, the optimal solution $(y^*,p^*,u^*) \in Y^K \times Y^K \times U$ satisfies
\begin{subequations}
\label{OS_fe_strong}
\begin{align}
	m(y^{*,k} - y^{*,k-1},\phi) + \tau \,  a ( y^{*,k},\phi;\mu) & = \tau \, f(\phi)
	&& \forall \phi \in Y, \ \forall k \in \xK, \label{OS_fe_strong:1} \\ 
	m(y^{*,0}, \phi) & = m(u^*,\phi) && \forall \phi \in Y, \label{OS_fe_strong:2} \\
	m(\varphi, p^{*,k} - p^{*,k+1}) + \tau \,  a ( \varphi,p^{*,k};\mu)  &=
	 \tau \,  (z_d^k - C y^{*,k}, C \varphi)_{D} \hspace{-8ex} \notag
	\\ & &&  \forall \varphi \in Y, \ \forall k \in \xK,  \label{OS_fe_strong:3} \\
	(u^{*} - u_d,\psi)_U  - m(\psi, p^{*,1}) &= 0 &&
	\forall \psi \in U, \label{OS_fe_strong:4}
\end{align}
\end{subequations}
where the final condition of the adjoint is given by $p^{*,K+1} = 0$. \cbb{Concerning the existence and uniqueness of the 4D-Var problem specifically and of saddle point problems in general we refer to~\cite{Broecker2017} and~\cite{BGL2005a}.}

\subsubsection{Algebraic Formulation}
\label{ssec:alg_form}


The 4D-Var problem is usually stated using an algebraic formulation~\cite{Ide1997}. We thus briefly outline the algebraic equivalent of~\refeq{fe_strong} by introducing a basis for the finite element spaces $Y$  and $U$ such that $Y = \spn \{\phi_i^y, \, i = 1, \ldots, \cN_Y\}$ and $U = \spn \{\phi_i^u, \, i = 1, \ldots, \cN_U\}$, respectively.  We express the state, adjoint, and control, respectively, as 
$$y^k = \textstyle \sum\limits_{i = 1}^{\cN_Y} y^k_i \phi_i^y, \qquad p^k = \textstyle \sum\limits_{i = 1}^{\cN_Y} y^k_i \phi_i^y, \qquad u = \textstyle \sum\limits_{i = 1}^{\cN_U} u_i \phi_i^u,$$
\noindent and denote the corresponding coefficient vectors by $\rmy^k = [y_1^k, \ldots, y_{\cN_Y}^k]^T \in \xR^{\cN_Y}$, $\rmp^k = [p_1^k, \ldots, p_{\cN_Y}^k]^T \in \xR^{\cN_Y}$, and $\rmu = [u_1, \ldots, u_{\cN_U}]^T \in \xR^{\cN_U}.$  We thus obtain the algebraic formulation of the classical 4D-Var minimization problem
\begin{gather}
\begin{aligned}
	\label{fe_strong_alg}
	& \min J(\rmy,\rmu;\mu) = \frac{1}{2} (\rmu
	- \rmu_b)^T \rmU  (\rmu - \rmu_b)
	+ \frac{\tau}{2} \sum_{k=1}^K (\rmC \rmy^k - \rmz_d^k)^T \rmD
	(\rmC \rmy^k - \rmz_d^k), \\
	&  \text{s.t.} \quad \rmy^k \in \xR^{\cN_Y} \quad\text{solves}\quad
	\rmM \rmy^k + \tau \, \rmA(\mu) \rmy^k = \rmM \rmy^{k-1}
	+ \tau \rmF \quad \forall k \in \xK, \\ & \text{with initial condition}\quad
	\rmM \rmy^0 = \rmM_u \rmu.
\end{aligned}
\end{gather}
Here, $\rmM\in \xR^{\cN_Y \times \cN_Y}$, $\rmA(\mu) \in \xR^{\cN_Y \times \cN_Y}$, $\rmF\in \xR^{\cN_Y}$, and $\rmC\in \xR^{\ell \times \cN_Y}$ are the usual finite element mass matrix, stiffness matrix, load vector, and state-to-output matrix with entries $\rmM_{ij} = m(\phi_j^y,\phi_i^y)$, $\rmA_{ij}(\mu) = a(\phi_j^y,\phi_i^y;\mu)$, $\rmF_{i} = f(\phi_i^y)$, and $\rmC_{ij} = h_i(\phi_j^y)$, respectively. The matrix $\rmM_u \in \xR^{\cN_Y \times \cN_U}$ is given by $(\rmM_u)_{ij} = m(\phi_j^u,\phi_i^y)$. Furthermore, the matrices $\rmU \in \xR^{\cN_Y \times \cN_Y}$ with entries $\rmU_{ij} = (\phi_j^y,\phi_i^y)_U$ and $\rmD \in \xR^{\ell \times \ell}$ with entries $\rmD_{ij} = (e_j,e_i)_D$ can be identified as the inverses of the background and observation error covariance matrices, respectively. Here, $e_i$ denotes the $i$th unit vector in $\xR^{\ell}$.

The derivation and algebraic formulation of the optimality system~\refeq{OS_fe_strong} is standard and thus omitted for brevity. Further, in our problem setting the first-discretize-then-optimize and first-optimize-then-discretize strategies lead to the same algebraic formulation of the first-order optimality system. For more details on these two approaches, we refer to~\cite{HPU+2009} \cbb{and for time-dependent problems specifically to \cite{SW2013}.}

\subsection{Reduced basis approximation}
\label{ssec:rb_strong}

We first assume that we are given the reduced basis spaces $Y_N \subset Y$ for the state and adjoint, and $U_N^0 \subset U$ for the control. Here, $1 \le N \le N_{\rmmax}$ is the number of iterations of the POD-Greedy sampling procedure to construct the spaces $Y_N$ and $U_N^0$ discussed in Section~\ref{ssec:greedy_weak}. Note that the dimensions $N_Y(N) := \dim(Y_N)$ and $N_U^0(N) := \dim(U_N^0)$ of the reduced basis spaces depend on $N$ but are in general not equal to $N$. Furthermore, the basis functions of $Y_N$ and $U_N^0$ are orthogonalized with respect to the $(\cdot,\cdot)_Y$ and $(\cdot,\cdot)_U$ inner product, respectively.


We next replace the finite element approximation of the PDE constraint in the 4D-Var problem statement~\eqref{fe_strong} with its reduced basis approximation. For a given parameter $\mu \in \cD$, the reduced-order 4D-Var data assimilation problem can thus be stated as
\begin{gather}
\begin{aligned}
	\label{rb_strong}
	& \min_{y_N \in Y_N^K, \, u_N \in U_N^0} J(y_N,u_N;\mu) \quad \text{s.t.} \quad y_N \in Y_N^K \quad\text{solves} \\ & m(y_N^k,v)
	+ \tau \, a(y_N^k,v;\mu) = m(y_N^{k-1},v) + \tau f(v) \quad \forall v \in
	Y_N, \ \forall k \in \xK,
\end{aligned}
\end{gather}
with initial condition $m(y_N^0,v) = m(u_N,v)$ for all $v \in Y_N$.

We can again employ a Lagrangian approach to obtain the reduced-order optimality system: 
Given any $\mu \in \cD$, the optimal solution $(y_N^*,p_N^*,u_N^*) \in Y_N^K \times Y_N^K \times U_N^0$ satisfies
\begin{subequations}
\label{OS_rb_strong}
\begin{align}
	m(y_N^{*,k} - y_N^{*,k-1},\phi) + \tau \,  a (y_N^{*,k},\phi;\mu) & = \tau \, f(\phi)
	&& \forall \phi \in Y_N, \ \forall k \in \xK, \label{OS_rb_strong:1} \\ 
	m(y_N^{*,0}, \phi) & = m(u_N^*,\phi) && \forall \phi \in Y_N, \label{OS_rb_strong:2} \\
 m(\varphi, p_N^{*,k} - p_N^{*,k+1}) + \tau \,  a (\varphi,p_N^{*,k};\mu) &=
	\tau \,  (z_d^k - C y_N^{*,k}, C \varphi)_{D}  \hspace{-10ex} \notag \\
	& && \forall \varphi \in Y_N, \ \forall k \in \xK, \label{OS_rb_strong:3} \\
	(u_N^{*} - u_d,\psi)_U  - m(\psi, p_N^{*,1}) & = 0 &&
	\forall \psi \in U_N^0, \label{OS_rb_strong:4}
\end{align}
\end{subequations}
where the final condition of the adjoint is given by $p_N^{*,K+1} = 0$. The reduced-order optimality system can be solved efficiently using an offline-online computational procedure which is briefly discussed in Section~\ref{ssec:comp_strong}.

Note that we use a single reduced basis ansatz and test space for the state and adjoint equations for two reasons: first, a single space for state and adjoint guarantees the stability of the reduced-order optimality system~\cite{GV2011a}; and second, the reduced-order optimality system~\eqref{OS_rb_strong} reflects the reduced-order  4D-Var problem~\eqref{rb_strong} only if the spaces of the state and adjoint equations are identical. Since the state and adjoint solutions need to be well-approximated using the single space $Y_N$, we combine both snapshots of the state and adjoint equations into the reduced basis space $Y_N$. 

\cbb{We also note that the dynamics of the state and adjoint are often different, and thus separate spaces for the state and adjoint would be beneficial concerning the computational efficiency, i.e. the dimension of the state/adjoint reduced basis space  and thus the overall dimension of the reduced-order optimality system would be considerably smaller. However, this requires a Petrov-Galerkin projection for the state and adjoint with associated detriment concerning the stability.}

\subsection{A~posteriori error estimation}
\label{sec:ee_strong}

We turn to the \textit{a~posteriori} error estimation procedure. Although we consider a parametrized problem here, we note that the error bounds proposed below can also be used in the non-parametrized reduced-order setting and are independent of how the reduced-order spaces are constructed, i.e., the bound directly applies to reduced-order approaches where the spaces are constructed e.g. using empirical orthogonal functions, POD, or dual-weighted POD~\cite{DN2008}. 

As mentioned above, our main goal is to rigorously bound the error in the optimal control, $u^* - u_N^*$. This will allow us to confirm the fidelity of the reduced-order 4D-Var solution efficiently during the online stage. Our \textit{a~posteriori} error bounds are also crucial in the construction of the reduced basis spaces by the POD-Greedy algorithm (see Section~\ref{ssec:greedy_strong}).

To begin, we require the residuals
\begin{align}
	\ryk(\phi;\mu) &= f(\phi) - a(\yNoptk,\phi;\mu) - \frac{1}{\tau}
	m(\yNoptk - \yNoptkm,\phi) \quad  \forall \phi \in Y, \ k \in \xK, \\
	\rpk(\varphi;\mu) &=  (z_d^k - C \yNoptk, C \varphi)_D
	- a(\varphi,\pNoptk;\mu) - \frac{1}{\tau} m(\varphi,\pNoptk-\pNoptkp)
	\notag \\ & \hspace{2.88in} \forall \varphi \in Y, \ k \in \xK, \\
	r_u(\psi;\mu) &= m(\psi,p_N^{*,1}) - (u_N^* - u_d, \psi)_\cU
	\quad  \forall \psi \in U.
\end{align}
We also define
\begin{equation}
	R_y = \Big( \tau \sum_{k=1}^K \norm{\ryk}_{Y'}^2 \Big)^{1/2},
	\qquad 
	R_p = \Big( \tau \sum_{k=1}^K \norm{\rpk}_{Y'}^2 \Big)^{1/2},
	\label{eq:bigR}
\end{equation}
and the errors $\eyk = y^{*,k} - \yNoptk$, $\epk = p^{*,k} - \pNoptk$, and $e_u = u^* - u_N^*$. Note that we use $\norm{r_{y,p}^k}_{Y'}$ and $\norm{r_u}_{U'}$ as a shorthand notation for $\norm{r_{y,p}^k(\cdot;\mu)}_{Y'}$ and $\norm{r_u(\cdot;\mu)})_{U'}$, respectively. We can now state our main result:

\begin{proposition}\label{prop:P1}
Let $u^*$ and $u_N^*$ be the optimal solutions of the full-order and reduced-order 4D-Var problems,~\refeq{fe_strong} and \refeq{rb_strong}, respectively. The error satisfies
\begin{equation} 
\norm{u^* - u_N^*}_U \leq  \Delta_N^u(\mu) := c_1(\mu) + \sqrt{c_1(\mu)^2 + c_2(\mu)} \quad \forall \mu \in \cD,
 \label{eq:eb_strong}
\end{equation}
where $c_1(\mu)$ and $c_2(\mu)$ are given by
\begin{align}
c_1(\mu) & = \frac{1}{2} \left( \norm{r_u(\cdot;\mu)}_{\cU'}
	+ \frac{1}{\sqrt{\alphaaLB}} R_p \right),  \quad \text{and} \label{eq:c1} \\ \quad c_2(\mu) & =  \left( \frac{\sqrt{2} + 1}{\alphaaLB} R_y R_p
	+ \frac{\gammac^2}{2 (\alphaaLB)^2} R_y^2 \right).
\label{eq:c2}
\end{align}
\end{proposition}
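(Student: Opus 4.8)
The plan is to subtract the reduced-order optimality system \eqref{OS_rb_strong} from the full-order system \eqref{OS_fe_strong}, insert the three residuals, and close the resulting error relations into a single quadratic inequality for $\norm{e_u}_U$. First I would form the error equations. Subtracting the state equations and using the definition of \ryk{} gives $\frac{1}{\tau} m(\eyk-\eykm,\phi) + a(\eyk,\phi;\mu) = \ryk(\phi;\mu)$ for all $\phi \in Y$, $k \in \xK$; subtracting the adjoint equations and using \rpk{} gives $\frac{1}{\tau} m(\varphi,\epk-\epkp) + a(\varphi,\epk;\mu) = \rpk(\varphi;\mu) - (C\eyk,C\varphi)_D$ for all $\varphi \in Y$, $k \in \xK$, the extra observation term appearing because the adjoint source depends on the state; and subtracting the optimality conditions and using $r_u$ gives $(e_u,\psi)_U = m(\psi,e_p^1) + r_u(\psi;\mu)$ for all $\psi \in U$. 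Choosing $\psi = e_u$ produces the master identity $\norm{e_u}_U^2 = m(e_u,e_p^1) + r_u(e_u;\mu)$, whose last term is already controlled by $\norm{r_u}_{U'}\norm{e_u}_U$, supplying the $\norm{r_u}_{U'}$ part of the linear term in \eqref{eq:c1}.

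Second, I would rewrite $m(e_u,e_p^1)$ in terms of residuals by a space-time duality argument. Using the two initial-condition relations to identify $m(e_u,e_p^1)$ with $m(e_y^0,e_p^1)$, I would test the state-error equation with $\phi = \epk$ and the adjoint-error equation with $\varphi = \eyk$, subtract the two, and sum over $k \in \xK$. The symmetric $a$-contributions cancel, the discrete time derivatives telescope, and the final condition $e_p^{K+1}=0$ removes the endpoint term, leaving the identity $m(e_y^0,e_p^1) = \tau \sum_{k=1}^K \bigl( \rpk(\eyk;\mu) - \ryk(\epk;\mu) - \norm{C\eyk}_D^2 \bigr)$. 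Cauchy--Schwarz in space and in time then bounds the two residual sums by $R_p \, \enorm{e_y} + R_y \, \enorm{e_p}$, where $\enorm{e_y} = (\tau \sum_k \norm{\eyk}_Y^2)^{1/2}$ and likewise for $e_p$, while the observation sum $Q^2 := \tau \sum_k \norm{C\eyk}_D^2$ is retained with its negative sign.

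Third, I would supply energy estimates for the two space-time norms and assemble the quadratic. Testing the state-error equation with $\phi = \eyk$, invoking coercivity \eqref{a_coer} through \alphaaLB{} together with a Young inequality and summing in time, I expect $\alphaaLB \, \enorm{e_y}^2 \le \alphaaLB^{-1} R_y^2 + \norm{e_y^0}_{L^2}^2$, and the initial-condition relation controls $\norm{e_y^0}_{L^2}$ by $\norm{e_u}_U$, which is how the control error re-enters. Testing the adjoint-error equation with $\varphi = \epk$, summing backward in time and again using $e_p^{K+1}=0$, gives $\alphaaLB \, \enorm{e_p} \le R_p + \gammac \, Q$ with $Q \le \gammac \, \enorm{e_y}$. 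Substituting these bounds into the master identity, retaining $-Q^2$ to complete the square in $Q$ (this turns the \gammac-coupling into the $\frac{\gammac^2}{2 \alphaaLB^2} R_y^2$ term) and collecting the remaining cross terms into $\frac{\sqrt{2}+1}{\alphaaLB} R_y R_p$ and the $\alphaaLB^{-1/2} R_p \norm{e_u}_U$ contribution to $c_1$, I arrive at $\norm{e_u}_U^2 \le 2 c_1(\mu) \norm{e_u}_U + c_2(\mu)$ with $c_1,c_2$ as in \eqref{eq:c1}--\eqref{eq:c2}. Solving this quadratic yields $\norm{e_u}_U \le c_1 + \sqrt{c_1^2 + c_2}$, i.e.\ \eqref{eq:eb_strong}.

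I expect the main obstacle to be the bookkeeping of the two-way coupling that makes the estimate self-referential: the control error fixes the initial data of the state error, the state error drives the adjoint error through the observation operator $C$, and the adjoint error feeds back into the control error through optimality. Making the identification of $m(e_u,e_p^1)$ with $m(e_y^0,e_p^1)$ rigorous is delicate, since the reduced initial projection lives in $Y_N$ rather than in all of $Y$; and the Young-inequality weights must be chosen so that the feedback closes into one quadratic with exactly the constants appearing in $c_2$, since a careless split would leave a spurious $\gammac^2 R_y \norm{e_u}_U$ term and destroy the clean form of $c_1$ in \eqref{eq:c1}.
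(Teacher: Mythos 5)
Your proposal follows essentially the same route as the paper's proof: the same three tested error--residual equations combined into the master identity $\norm{e_u}_U^2 + \tau\sum_k\norm{Ce_y^k}_D^2 = -\tau\sum_k \ryk(\epk) + \tau\sum_k \rpk(\eyk) + r_u(e_u)$, the same spatio-temporal energy estimates for $e_y$ (driven by $e_y^0=e_u$) and for $e_p$ (driven by $C e_y$), and the same completion of the square in the observation term followed by solving the quadratic inequality for $\norm{e_u}_U$. The only caveats are cosmetic: the cancellation of the $a(\eyk,\epk;\mu)$ terms needs no symmetry of $a$, and your schematic adjoint bound is missing the $\sqrt{2}$ factors (the paper gets $\enorm{e_p}\le \sqrt{2}\,R_p/\alphaaLB + \sqrt{2}\,\gammac Q/\alphaaLB$), which you correctly anticipate must be tracked to reproduce the exact constants in $c_2(\mu)$.
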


\begin{proof}
We start from the error-residual equations obtained from~\eqref{OS_fe_strong} and the definitions of the residuals
\begin{align}
	m(\eyk - \eykm,\phi) + \tau \, a(\eyk,\phi;\mu) &= \tau \, \ryk(\phi;\mu),
	 \qquad \qquad \forall \phi \in Y, \; k \in \xK, \label{err_res_a} \\
	m(\varphi,\epk - \epkp) + \tau \, a(\varphi,\epk;\mu) &= \tau \, \rpk(\varphi;\mu)
	-\tau \, (C \eyk, C \varphi)_D, \notag \\ & \hspace{1.35in} \forall \varphi \in Y,
	\; k \in \xK, \label{err_res_b} \\
	(e_u,\psi)_\cU - m(\psi,e_p^1) &= r_u(\psi;\mu), \hspace{0.62in} \forall \psi \in U,
	\label{err_res_c}
\end{align}
where $e_p^{K+1} = 0$ and $e_y^0 = e_u$. We first choose $\phi = \epk$ in \eqref{err_res_a} and take the sum from $k=1$ to $K$ to get
\begin{equation}
	\sum_{k=1}^K m(\eyk-\eykm,\epk) + \tau \sum_{k=1}^K a(\eyk,\epk;\mu)
	= \tau \sum_{k=1}^K \ryk(\epk;\mu). \label{eq1}
\end{equation}
Similarly, choosing $\varphi = \eyk$ in \eqref{err_res_b} and summing from $k=1$ to $K$  we obtain
\begin{equation}
	\sum_{k=1}^K m(\eyk,\epk-\epkp) + \tau \sum_{k=1}^K a(\eyk,\epk;\mu)
	= \tau \sum_{k=1}^K \rpk(\eyk;\mu)
	- \tau  \sum_{k=1}^K \norm{C \eyk}_D^2. \label{eq2}
\end{equation}
Finally, from~\eqref{err_res_c} with $\psi = e_u$ we have
\begin{equation}
	\norm{e_u}_\cU^2 - m(e_u,e_p^1) = r_u(e_u;\mu). \label{eq3}
\end{equation}
By adding equations \eqref{eq2} and \eqref{eq3}, and then subtracting \eqref{eq1} we get
\begin{multline}
	\sum_{k=1}^K m(\eykm,\epk) - \sum_{k=1}^K m(\eyk,\epkp) - m(e_u,e_p^1)
	+ \norm{e_u}_\cU^2 \label{eq4} \\
	= -\tau \sum_{k=1}^K \ryk(\epk;\mu) + \tau \sum_{k=1}^K \rpk(\eyk;\mu)
	+r_u(e_u;\mu)  - \tau  \sum_{k=1}^K \norm{C \eyk}_D^2.
\end{multline}
Since $e_y^0=e_u$, and $e_p^{K+1}=0$, the left-hand side of \eqref{eq4} reduces to $\norm{e_u}_\cU^2$ and we thus obtain
\begin{align}
	\norm{e_u}_\cU^2 + \tau  \sum_{k=1}^K \norm{C \eyk}_D^2
	&= -\tau \sum_{k=1}^K \ryk(\epk;\mu) + \tau \sum_{k=1}^K \rpk(\eyk;\mu)
	+r_u(e_u;\mu)  \notag \\
	&\le \Big( \tau \sum_{k=1}^K \norm{\ryk}_{Y'}^2 \Big)^{1/2}
	\Big( \tau \sum_{k=1}^K \norm{\epk}_Y^2 \Big)^{1/2} \notag \\
	&+ \Big( \tau \sum_{k=1}^K \norm{\rpk}_{Y'}^2 \Big)^{1/2}
	\Big( \tau \sum_{k=1}^K \norm{\eyk}_Y^2 \Big)^{1/2}
	+ \norm{r_u}_{\cU'} \norm{e_u}_\cU. \label{eq9}
\end{align}
From the proof for the spatio-temporal energy norm bound in~\cite{GP2005,KG2014} we know that
\begin{equation}
	\tau \sum_{k=1}^K \norm{\eyk}_Y^2
	\le \frac{\tau}{(\alphaaLB)^2} \sum_{k=1}^K \norm{\ryk}_{Y'}^2
	+ \frac{1}{\alphaaLB} \underbrace{m(e_u,e_u)}_{= \norm{e_u}_\cU^2}.
	\label{eq10}
\end{equation}
We need an analogous result for the adjoint. To this end, we first choose $\varphi = \epk$ in \eqref{err_res_b} to obtain
\begin{equation}
	m(\epk,\epk - \epkp) + \tau \, a(\epk,\epk;\mu) = \tau \, \rpk(\epk;\mu)
	- \tau \, (C \eyk, C \epk)_D. \label{eq5}
\end{equation}
We next note from the Cauchy-Schwarz inequality and Young's inequality that
\begin{equation}
	2 \, m(\epk,\epkp) \le m(\epk,\epk)+ m(\epkp,\epkp), \label{eq6}
\end{equation}
and also that
\begin{align}
	2 \, \tau \, (C \eyk, C \epk)_D
	& \le 2 \, \tau \,  \norm{C \eyk}_D \, \norm{C \epk}_D
	\le 2 \, \tau  \, \norm{C \eyk}_D \, \gammac \, \norm{\epk}_Y \notag
	\\ & \le \frac{2 \, \tau \, \gammac^2}{\alphaaLB} \norm{C \eyk}_D^2
	+ \frac{\tau \, \alphaaLB}{2} \norm{\epk}_Y^2, \label{eq7}
\end{align}
where we also used the definition of the constant $\gammac$. Finally, again from Young's inequality we obtain
\begin{equation}
	2 \, \tau \, \rpk(\epk;\mu) \le \frac{2 \, \tau}{\alphaaLB} \norm{\rpk}_{Y'}^2
	+ \frac{\tau \, \alphaaLB}{2} \norm{\epk}_Y^2. \label{eq8}
\end{equation}
By summing two times \eqref{eq5} from $k=1$ to $K$ and invoking \eqref{eq6}, \eqref{eq7}, and \eqref{eq8}, we obtain
\begin{equation}
	m(e_p^1,e_p^1) + \tau \sum_{k=1}^K a(\epk,\epk;\mu)
	\le \frac{2 \, \tau}{\alphaaLB} \sum_{k=1}^K \norm{\rpk}_{Y'}^2
	+ \frac{2 \, \tau \, \gammac^2}{\alphaaLB} \sum_{k=1}^K \norm{C \eyk}_D^2,
\end{equation}
and hence
\begin{equation}
	\tau \sum_{k=1}^K \norm{\epk}_Y^2
	\le \frac{2 \, \tau}{(\alphaaLB)^2} \sum_{k=1}^K \norm{\rpk}_{Y'}^2
	+ 2 \left( \frac{ \gammac}{\alphaaLB} \right)^2
	\tau \sum_{k=1}^K \norm{C \eyk}_D^2. \label{eq11}
\end{equation}
Using the inequalities \eqref{eq10} and \eqref{eq11} in \eqref{eq9}, invoking the definitions~\eqref{eq:bigR}, and noting that $(a^2 + b^2)^{1/2} \le \abs{a} + \abs{b}$, it follows that
\begin{align}
	\norm{e_u}_\cU^2 + \tau \sum_{k=1}^K \norm{C \eyk}_D^2
	\le& \norm{r_u}_{\cU'} \norm{e_u}_\cU + R_p \left[ \frac{1}{(\alphaaLB)^2} R_y^2
	+ \frac{1}{\alphaaLB} \norm{e_u}_\cU^2  \right]^{1/2} \notag \\ & + R_y \left[ \frac{2}{(\alphaaLB)^2} R_p^2
	+ 2 \left( \frac{ \gammac}{\alphaaLB} \right)^2
	\tau \sum_{k=1}^K \norm{C \eyk}_D^2 \right]^{1/2} \notag \\[1ex]
	\le& \norm{r_u}_{\cU'} \norm{e_u}_\cU + R_p \left[ \frac{1}{\alphaaLB} R_y
	+ \frac{1}{\sqrt{\alphaaLB}} \norm{e_u}_\cU \right] \notag \\ & + R_y \left[ \frac{\sqrt{2}}{\alphaaLB} R_p
	+ \frac{\sqrt{2} \, \gammac }{\alphaaLB}
	\Big( \tau \sum_{k=1}^K \norm{C \eyk}_D^2 \Big)^{1/2} \right].
	\label{eq:longineq}
\end{align}
We now use Young's inequality to bound 
\begin{equation}
	R_y \frac{\sqrt{2} \, \gammac}{\alphaaLB}
	\Big(  \tau \sum_{k=1}^K \norm{C \eyk}_D^2 \Big)^{1/2}
	\le \frac{\gammac^2}{2 (\alphaaLB)^2} R_y^2
	+  \tau \sum_{k=1}^K \norm{C \eyk}_D^2,
\end{equation}
and thereby eliminate the second term on the left-hand side of the inequality (\ref{eq:longineq}) to obtain
\begin{multline}
	\norm{e_u}_\cU^2 \le \norm{r_u}_{\cU'} \norm{e_u}_\cU
	+ \frac{1}{\sqrt{\alphaaLB}} R_p \norm{e_u}_\cU
	\\ + \frac{\sqrt{2} + 1}{\alphaaLB} R_y R_p
	+ \frac{\gammac^2}{2 (\alphaaLB)^2} R_y^2.
	\label{eq:shorterineq}
\end{multline}
Using the definitions of $c_1(\mu)$ and $c_2(\mu)$ in~\eqref{eq:c1} and~\eqref{eq:c2}, respectively, (\ref{eq:shorterineq}) simplifies to
\begin{align}
	\norm{e_u}_\cU^2 -  2 \, c_1(\mu) \, \norm{e_u}_\cU - c_2(\mu) \le 0.
\end{align}
We obtain the desired result by bounding the error $\norm{e_u}_{\cU}$ by the larger root of the quadratic inequality.
\end{proof}

\cbb{We note that we currently cannot assess the tightness of the error bound~\refeq{eq:eb_strong} by providing an {\it a priori} upper bound for the effectivity, i.e., the ratio of the bound to the error. We present numerical results for the effectivity in Section~\ref{ssec:rom4dvar}.}

\subsection{Computational Procedure}
\label{ssec:comp_strong}

We briefly comment on the computational procedure to solve the reduced-order 4D-Var problem and to evaluate the error bound. Given the affine parameter dependence, the offline-online decomposition for the reduced basis approximation is already quite standard in the reduced basis literature~\cite{RHP2008}; for the parabolic case considered in this paper, we also specifically refer to~\cite{GP2005,KG2014}. The evaluation of the \textit{a~posteriori} error bounds requires the following ingredients:
\begin{itemize}
	\item the dual norm of the residuals $\norm{\ryk}_{Y'}$, $\norm{\rpk}_{Y'}$,
	and $\norm{\ru}_{\cU'}$;
	\item the coercivity lower bound $\alphaaLB$ and the constant $\gammac$.
\end{itemize}
For the construction of the coercivity lower bound, $\alphaaLB$, various recipes exist~\cite{HRS+2007,PRV+2002,VRP2002}. The specific choices for our numerical tests are stated in Section~\ref{sec:results}. The constant $\gammac$ is parameter-independent and can be computed by solving a generalized eigenproblem. The offline-online evaluation of the dual norms of the residuals is standard and hence omitted~\cite{RHP2008}. For a summary of the computational cost in the parabolic optimal control context, we refer to~\cite{KG2014}. 

We solve the full-order and reduced-order 4D-Var problems with a preconditioned Newton-CG method on the ``reduced'' cost functional $j(u;\mu) := J(y(u),u;\mu)$, i.e., we eliminate the PDE-constraint in the minimization problem. The control mass matrix is used as a preconditioner. We present results for the number of CG iterations in Section~\ref{sec:results}. Overall, the online computational cost to solve the \textit{reduced-order} 4D-Var problem \textit{and} to evaluate the \textit{a~posteriori} error bound depends only on the reduced basis dimensions $N_Y$ and $N_U^0$, but is \textit{independent} of $\cN$.

\subsection{Greedy Algorithm}
\label{ssec:greedy_strong}

To construct the reduced basis spaces $Y_N$ and $U_N^0$, we use the POD-Greedy sampling procedure in Algorithm~\ref{alg:greedy_strong}. Here, $\Xi_\train \subset \cD$ is a finite but suitably large training sample, $\mu^1 \in \Xi_\train$ is the initial parameter value, $N_{\rm max}$ the maximum number of greedy iterations, and $\epsilon_\tolmin > 0$ a prescribed error tolerance. We also define the relative error bound $\Delta_{N,{\rm rel}}^u(\mu) = \Delta_N^u(\mu)/\norm{u_N^*(\mu)}_U$. Furthermore, for a given time history $v_k \in Y, \ k \in \xK$, the operator $\textrm{POD}_Y(\{ v_k: k \in \xK \})$ returns the largest POD-mode with respect to the $(\cdot,\cdot)_Y$ inner product (normalized with respect to the $Y$-norm), and $v^k_{{\textrm{proj}},N}(\mu)$ denotes the $Y$-orthogonal projection of~$v^k(\mu)$ onto the reduced basis space $Y_N$.

In steps~6 and 7 of Algorithm~\ref{alg:greedy_strong} we expand the reduced basis space $Y_N$ with the largest POD mode of both the state and the adjoint solution. Note that we apply the POD in these two steps to the time history of the optimal state and adjoint projection errors, i.e., $e^{y,k}_{\textrm{proj},N}(\mu) = y^{*,k}(\mu) - y^{*,k}_{\textrm{proj},N}(\mu)$ and $e^{p,k}_{\textrm{proj},N}(\mu) = p^{*,k}(\mu) - p^{*,k}_{\textrm{proj},N}(\mu), \ k \in \xK$, and not to the solutions $y^k(\mu),\ k \in \xK$, and  $p^k(\mu),\ k \in \xK$, itself.\footnote{For the first iteration of the algorithm we define $v^k_{{\textrm{proj}},0}(\mu) = 0$, and hence $e^{y,k}_{\textrm{proj},0}(\mu) = y^k(\mu)$ and $e^{p,k}_{\textrm{proj},0}(\mu) = p^k(\mu)$.} This ensures that the POD modes are already orthogonal with respect to the $(\cdot,\cdot)_Y$ inner product and that we add only new information to $Y_N$ which is not yet captured in the reduced basis.

In step~8 we expand the reduced basis space $U_N^0$ with the optimal control at $\mu^*$. Due to the time-dependence of the state and adjoint, it is possible that a specific parameter $\tilde{\mu}$ is picked several times by the greedy search in step~9. Before expanding $U_N^0$, we thus need to check if the new snapshot is already contained in the reduced basis space $U_{N-1}^0$, and consequently discard linearly dependent snapshots. By construction, we thus have $\dim(U_N^0) \le N$ and $\dim(Y_N) = 2N$ (although it is theoretically possible that $\dim(Y_N) \le 2N$, we did not observe this case in the numerical results). Finally, we note that information from the data assimilation cost functional enters through the adjoint equation and the adjoint snapshots into $Y_N$. 

\begin{algorithm}
\begin{onehalfspace}
\caption{Sampling Procedure: Strong-constraint 4D-Var}
\label{alg:greedy_strong}
\begin{algorithmic}[1]
\STATE Choose $\Xi_\train \subset \cD$, $\mu^1 \in \Xi_\train$, $N_{\rm max}$, and
$\epsilon_\tolmin > 0$
\STATE Set $N \leftarrow 0$, \quad $Y_N \leftarrow \{ \}$, \quad $U_N^0 \leftarrow \{ \}$
\STATE Set $\mu^* \leftarrow \mu^1$ \; and \; $\Delta_{N,{\rm rel}}^u(\mu^*) \leftarrow \infty$
\WHILE {$\Delta_{N,{\rm rel}}^u(\mu^*) > \epsilon_\tolmin$ {\bf and} $N \leq N_{\rm max}$}
\STATE $N \leftarrow N + 1$
\STATE $\zeta_1
= \textrm{POD}_Y\big(\big\{ e^{y,k}_{\textrm{proj},N-1}(\mu^*): k \in \xK
\big\}\big)$, \quad $Y_N \leftarrow Y_{N-1} \oplus \spn \{ \zeta_1 \}$
\STATE $\zeta_2
= \textrm{POD}_Y\big(\big\{ e^{p,k}_{\textrm{proj},N}(\mu^*): k \in \xK
\big\}\big)$, \quad \quad $Y_N \leftarrow Y_{N} \oplus \spn \{ \zeta_2 \}$
\STATE $U_N^0 \leftarrow U_{N-1}^0 \oplus \spn \{ u^*(\mu^*) \}$
\STATE $\displaystyle \mu^* \leftarrow
\operatorname*{arg \, max}_{\mu \in \Xi_\train}\; \Delta_{N,{\rm rel}}^u(\mu)$
\ENDWHILE
\end{algorithmic}
\end{onehalfspace}
\end{algorithm}

\section{Weak-constraint 4D-Var}
\label{sec:weak}

We next consider the weak-constraint 4D-Var data assimilation problem, thus accounting for possible model errors in the dynamical system. For simplicity, we assume in this section that the initial condition is known and that we are only interested in bounding the model error. We consider the combined problem (unknown initial condition \textit{and} model error) in the next section.

\subsection{Problem statement}
\label{ssec:ps_weak}

To emphasize the relation between the weak-constraint 4D-Var problem and the optimal control setting, we denote in this section the model error by $u$. However, the model error is now time-dependent, i.e., $u = u^k, \, k \in \xK$, and appears in every time step of the dynamical system. For a given parameter $\mu \in \cD$, the weak-constraint 4D-Var problem is then given by the minimization problem
\begin{gather}
\begin{aligned}
	\label{fe_weak}
	& \min_{y \in Y^K, \, u \in U^K} J(y,u;\mu)  \quad \text{s.t.} \quad y \in Y^K \quad\text{solves} \\ & m(y^k,v) + \tau \,
	a(y^k,v;\mu) = m(y^{k-1},v) + \tau \, b(u^k,v) + \tau \, f(v)  \\ & \hspace{3in} \forall v \in Y,
	\ \forall k \in \xK, 
\end{aligned}
\end{gather}
with initial condition $m(y^0,v) = m(y_0,v)$ for all $v \in Y,$ and cost functional $J(\cdot,\cdot;\mu): Y^K \times U^K \to \xR$ given by 
\begin{equation}
\label{cost_fe_weak}
 J(y,u;\mu) = \frac{\tau}{2} \sum_{k=1}^K \norm{u^k - u_d^k}_{U}^2	+ \frac{\tau}{2} \sum_{k=1}^K \norm{C y^k - z_d^k}^2_{D}.
\end{equation}
We note that the cost functional now contains the contribution of the model error $u^k$ as a sum over all time steps. In the optimal control setting, $u_d^k \in U, \, k \in \xK$ denotes the desired optimal control. In the data assimilation setting, however,  $u_d^k$ is usually set to zero since the model error is generally assumed to be unbiased~\cite{LSZ2015}. We also note that a constant (known) bias can be taken into account by adjusting the right-hand side $f(v)$. Similar to the strong-constraint formulation, $z_d^k \in D$, $k \in \xK$, are the observed outputs. 

We again obtain the associated necessary and sufficient first-order optimality conditions using a Lagrangian approach: Given $\mu \in \cD$, the optimal solution $(y^*,p^*,u^*) \in Y^K \times Y^K \times U^K$ satisfies
\begin{subequations}
\label{OS_fe_weak}
\begin{align}
	m(y^{*,k} - y^{*,k-1},\phi) + \tau \,  a ( y^{*,k},\phi;\mu)  & = \tau \, b(u^k,\phi) + \tau \, f(\phi) \hspace{-10ex} \notag \\
	& && \forall \phi \in Y, \ \forall k \in \xK, \label{OS_fe_weak:1} \\
	m(y^0,\phi) & = m(y_0,\phi) && \forall \phi \in Y,  \label{OS_fe_weak:2} \\
	m(\varphi, p^{*,k} - p^{*,k+1}) + \tau \,  a (\varphi,p^{*,k};\mu) \notag  &=
	\tau \,  (z_d^k - C y^{*,k}, C \varphi)_{D} \hspace{-10ex} \notag \\
	& && \forall \varphi \in Y, \ \forall k \in \xK, \label{OS_fe_weak:3} \\
	\tau \, (u^{*,k} - u_d^k,\psi)_U  - \tau \,  b(\psi, p^{*,k}) & = 0 && 
	\forall \psi \in U, \ \forall k \in \xK, \label{OS_fe_weak:4}
\end{align}
\end{subequations}
where the final condition of the adjoint is given by $p^{*,K+1} = 0$. We note that the adjoint equation of the weak-constraint formulation~\eqref{OS_fe_weak:3} is identical to the adjoint of the strong constraint formulation~\eqref{OS_fe_strong:3}.

\subsection{Reduced basis approximation}
\label{ssec:rb_weak}

We again assume that we are given the reduced basis spaces $Y_N \subset Y$ for the state and adjoint and $U_N \subset U$ for the control. Whereas the construction of the space $Y_N$ directly follows from the discussion in Section~\ref{ssec:greedy_strong} for the strong-constraint case, the construction of $U_N$ needs to be adjusted to account for the time-dependence of the model error.  We briefly outline the procedure in Section~\ref{ssec:greedy_weak}.

For a given parameter $\mu \in \cD$, we can now state the weak-constraint reduced-order 4D-Var data assimilation problem as follows
\begin{gather}
\begin{aligned}
	\label{rb_weak}
	& \min_{y_N \in Y_N^K, \, u_N \in U_N^K} J(y_N,u_N;\mu) \quad \text{s.t.} \quad y_N \in Y_N^K \quad\text{solves} \\ & m(y_N^k,v)
	+ \tau \, a(y_N^k,v;\mu) = m(y_N^{k-1},v) + \tau \, b(u_N^k,v) + \tau \, f(v)  \\ & \hspace{3in} \forall v \in
	Y_N, \ \forall k \in \xK,
\end{aligned}
\end{gather}
with initial condition $m(y_N^0,v) = m(y_0,v)$ for all $v \in Y_N$. The reduced-order optimality system directly follows from~\eqref{OS_fe_weak} and is thus omitted. 


\subsection{A~posteriori error estimation}
\label{ssec:ee_weak}

We first introduce the residuals for the weak-constraint case
\begin{align}
	\rykt(\phi;\mu) &= f(\phi) + b(\uNoptk,\phi) - a(\yNoptk,\phi;\mu) - \frac{1}{\tau}
	m(\yNoptk - \yNoptkm,\phi) \notag \\ & \hspace{2.88in} \forall \phi \in Y, \ k \in \xK, \label{res_y_weak} \\
	\rpkt(\varphi;\mu) &=  (z_d^k - C \yNoptk, C \varphi)_D
	- a(\varphi,\pNoptk;\mu) - \frac{1}{\tau} m(\varphi,\pNoptk-\pNoptkp)
	\notag \\ & \hspace{2.88in} \forall \varphi \in Y, \ k \in \xK,  \label{res_p_weak} \\
	\rukt(\psi;\mu) &= m(\psi,\pNoptk) - (\uNoptk - u_d, \psi)_\cU
	\quad  \forall \psi \in U, \ k \in \xK. \label{res_u_weak}
\end{align}
Since the adjoint equations~\eqref{OS_fe_weak:3} and~\eqref{OS_fe_strong:3} are identical, the adjoint residual is actually equivalent to the strong-constraint case, i.e., $\rpk = \rpkt$. Similar to~\eqref{eq:bigR}, we introduce the sums from $k =1$ to $K$ of the dual norms of the residuals as
\begin{equation}
\tilde{R}_{y,p} = \Big( \tau \sum_{k=1}^K \norm{\tilde{r}_{y,p}^k(\cdot;\mu)}_{Y'}^2 \Big)^{1/2}, \qquad 
	\tilde{R}_u = \Big( \tau \sum_{k=1}^K \norm{\rukt(\cdot;\mu)}_{U'}^2 \Big)^{1/2},
	\label{eq:bigR_w}
\end{equation}
and the time-dependent model error $\euk = u^{*,k} - \uNoptk$. We may now state our main result:

\begin{proposition}\label{prop:P2}
Let $u^{*,k}$ and $\uNoptk$, $k \in \xK$, be the optimal solutions of the full-order and reduced-order 4D-Var problems~\refeq{fe_weak} and \refeq{rb_weak}, respectively. The error satisfies
\begin{equation} 
\Big( \tau \sum_{k=1}^{K} \norm{u^{*,k} - \uNoptk}_U^2 \Big)^{1/2} \leq  \tilde{\Delta}_N^u(\mu) := c_1(\mu) + \sqrt{c_1(\mu)^2 + c_2(\mu)} \quad \forall \mu \in \cD,
 \label{eq:eb_weak}
\end{equation}
where $c_1(\mu)$ and $c_2(\mu)$ are given by
\begin{align}
c_1(\mu) & = \frac{1}{2} \left( \tilde{R}_u
	+ \frac{\sqrt{2} \, \gammab}{\alphaaLB} \tilde{R}_p \right),  \quad \text{and} \label{eq:c1_weak} \\ \quad c_2(\mu) & =  \left( \frac{2\sqrt{2}}{\alphaaLB} \tilde{R}_y \tilde{R}_p
	+ \frac{\gammac^2}{2 (\alphaaLB)^2} \tilde{R}_y^2 \right).
\label{eq:c2_weak}
\end{align}
\end{proposition}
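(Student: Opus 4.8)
The plan is to mirror the proof of Proposition~\ref{prop:P1} as closely as possible, since the weak-constraint optimality system \eqref{OS_fe_weak} differs from the strong-constraint one \eqref{OS_fe_strong} in only two essential ways. First, the initial condition is now known and identical in the full- and reduced-order problems, so the initial state error vanishes, $e_y^0 = 0$, rather than equalling the control error. Second, the control no longer enters through the initial condition but through the forcing term $b(\uNoptk,\cdot)$ in every time step; consequently, subtracting the reduced state equation from the full-order one and invoking the definition \eqref{res_y_weak} of $\rykt$ yields the state error--residual equation
\begin{equation*}
  m(\eyk - \eykm,\phi) + \tau\,a(\eyk,\phi;\mu) = \tau\,\rykt(\phi;\mu) + \tau\,b(\euk,\phi), \qquad \forall \phi \in Y,\ k \in \xK,
\end{equation*}
while the adjoint error--residual equation coincides with \eqref{err_res_b} (with $\rpk$ replaced by the identical $\rpkt$), and the control error--residual equation, obtained by testing the fourth optimality condition, reads $(\euk,\psi)_U - b(\psi,\epk) = \rukt(\psi;\mu)$ for all $\psi \in U$, $k \in \xK$.

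Next I would test the state equation with $\phi = \epk$, the adjoint equation with $\varphi = \eyk$, and the control equation with $\psi = \euk$, summing each over $k = 1,\dots,K$. Forming the combination (adjoint)~$+$~(control)~$-$~(state), scaled by $\tau$ where appropriate, the coercive terms $\tau\sum_k a(\eyk,\epk;\mu)$ cancel and the mass terms telescope to $m(e_y^0,e_p^1)$, which now vanishes because $e_y^0 = 0$. The observation specific to the weak-constraint case is that the coupling term $\tau\sum_k b(\euk,\epk)$ generated on the right-hand side by the state equation cancels exactly against the $-\tau\sum_k b(\euk,\epk)$ generated on the left by the control equation. What remains is the clean energy identity
\begin{equation*}
  \tau\sum_{k=1}^K \norm{\euk}_U^2 + \tau\sum_{k=1}^K \norm{C\eyk}_D^2 = \tau\sum_{k=1}^K \rpkt(\eyk;\mu) + \tau\sum_{k=1}^K \rukt(\euk;\mu) - \tau\sum_{k=1}^K \rykt(\epk;\mu),
\end{equation*}
the direct analog of the energy identity in \eqref{eq9}. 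Applying Cauchy--Schwarz term by term and invoking \eqref{eq:bigR_w} bounds the right-hand side by $\tilde{R}_y(\tau\sum_k\norm{\epk}_Y^2)^{1/2} + \tilde{R}_p(\tau\sum_k\norm{\eyk}_Y^2)^{1/2} + \tilde{R}_u(\tau\sum_k\norm{\euk}_U^2)^{1/2}$.

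It then remains to control the two spatio-temporal energy norms. For the adjoint I would reuse \eqref{eq11} verbatim (with $R_p \to \tilde{R}_p$), since its derivation used only the adjoint equation. For the state I would apply the estimate \eqref{eq10}, treating $\rykt(\cdot;\mu) + b(\euk,\cdot)$ as an effective residual; since $e_y^0 = 0$ the boundary contribution drops, and bounding $\norm{b(\euk,\cdot)}_{Y'} \le \gammab\norm{\euk}_U$ together with $(a+b)^2 \le 2a^2 + 2b^2$ gives $(\tau\sum_k\norm{\eyk}_Y^2)^{1/2} \le \tfrac{\sqrt 2}{\alphaaLB}\tilde{R}_y + \tfrac{\sqrt 2\,\gammab}{\alphaaLB}(\tau\sum_k\norm{\euk}_U^2)^{1/2}$. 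It is precisely this step — and the choice of $(a+b)^2 \le 2a^2+2b^2$ over the sharper $\ell^2$ triangle inequality — that produces the $\sqrt 2$-factors distinguishing $c_1(\mu)$ and $c_2(\mu)$ in \eqref{eq:c1_weak}--\eqref{eq:c2_weak} from their strong-constraint counterparts. Substituting both energy bounds, using Young's inequality to absorb $\tau\sum_k\norm{C\eyk}_D^2$ against the term $\tfrac{\sqrt2\,\gammac}{\alphaaLB}\tilde{R}_y(\tau\sum_k\norm{C\eyk}_D^2)^{1/2}$, and setting $E_u := (\tau\sum_k\norm{\euk}_U^2)^{1/2}$, I would arrive at $E_u^2 - 2c_1(\mu)\,E_u - c_2(\mu) \le 0$, whence $E_u \le c_1(\mu) + \sqrt{c_1(\mu)^2 + c_2(\mu)}$ by taking the larger root.

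The main obstacle is bookkeeping rather than any new inequality: one must verify the exact cancellation of the $b(\euk,\epk)$ coupling terms and the vanishing of the telescoped mass term, and then correctly fold the now-distributed control into the state energy estimate as a forcing term. Reproducing the precise constants — in particular $2\sqrt2$ in $c_2(\mu)$ and $\sqrt2\,\gammab$ in $c_1(\mu)$ — hinges on consistently using $(a+b)^2 \le 2a^2+2b^2$ when splitting the effective state residual.
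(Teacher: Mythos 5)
Your proposal is correct and follows essentially the same route as the paper's proof: the same energy identity with the $b(\euk,\epk)$ coupling terms cancelling and the telescoped mass term vanishing since $e_y^0=0$, the same reuse of the adjoint energy bound, the same modified state energy bound with $\gammab$ in place of $\gammac$, and the same Young's-inequality absorption leading to the quadratic inequality in $\bigl(\tau\sum_k\norm{\euk}_U^2\bigr)^{1/2}$. Your derivation of the state energy bound via an ``effective residual'' $\rykt+b(\euk,\cdot)$ plugged into the strong-constraint estimate, followed by $(a+b)^2\le 2a^2+2b^2$, is a cosmetic variant of the paper's rederivation but produces the identical inequality and constants.
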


\begin{proof}
The proof follows partly from the proof of Proposition~\ref{prop:P1}; we thus stress the differences and refer to the previous proof whenever possible. We again start from the error-residual equations which are now given by 
\begin{align}
	m(\eyk - \eykm,\phi) + \tau \, a(\eyk,\phi;\mu) &= \tau \, \rykt(\phi;\mu) + \tau \, b(\euk,\phi) \hspace{-10ex} \notag \\ ,
	 & && \forall \phi \in Y, \; k \in \xK, \label{err_res_a_w} \\
	m(\varphi,\epk - \epkp) + \tau \, a(\varphi,\epk;\mu) &= \tau \, \rpkt(\varphi;\mu)
	-\tau \, (C \eyk, C \varphi)_D, \hspace{-10ex} \notag \\ & &&  \forall \varphi \in Y,
	\; k \in \xK, \label{err_res_b_w} \\
	\tau \, (\euk,\psi)_\cU - \tau \, b(\psi,\epk) &= \tau \, \rukt(\psi;\mu), && \forall \psi \in U, \; k \in \xK,
	\label{err_res_c_w}
\end{align}
where $e_p^{K+1} = 0$ and $e_y^0 = 0$, since we guarantee that $y_0 \in Y_N$. We now choose $\phi = \epk$ in \eqref{err_res_a_w}, $\varphi = \epk$ in \eqref{err_res_b_w}, and 
$\psi = \euk$ in~\eqref{err_res_c}, sum all equations from from $k=1$ to $K$ and combine them following the proof of Proposition~\ref{prop:P1} to obtain
\begin{align}
	\tau \sum_{k=1}^{K} & \norm{\euk}_\cU^2 + \tau  \sum_{k=1}^K \norm{C \eyk}_D^2
	\notag \\ &= -\tau \sum_{k=1}^K \rykt(\epk;\mu) + \tau \sum_{k=1}^K \rpkt(\eyk;\mu)
	 + \tau \sum_{k=1}^K \rukt(\euk;\mu) \notag \\
	&\le \tilde{R}_y
	\Big( \tau \sum_{k=1}^K \norm{\epk}_Y^2 \Big)^{1/2} + \tilde{R}_p
	\Big( \tau \sum_{k=1}^K \norm{\eyk}_Y^2 \Big)^{1/2}
	+ \tilde{R}_u \Big( \tau \sum_{k=1}^K \norm{\euk}_U^2 \Big)^{1/2}. \label{eq9_w}
\end{align}
We next bound the primal error. Since the primal equation contains the model error on the right-hand side, we need to extend the proof from~\cite{GP2005} for the spatio-temporal energy norm bound to include the extra term on the right-hand side. The derivation is similar to the one for the bound of the adjoint in the proof of Proposition~\ref{prop:P1} (cf.\ \eqref{eq5}~--~\eqref{eq11}), but instead of bounding the $(\cdot,\cdot)_D$ inner product using Cauchy-Schwarz and the constant $\gammac$, we invoke the continuity of the bilinear form $b(\cdot,\cdot)$. We can thus derive the bound
\begin{equation}
	\tau \sum_{k=1}^K \norm{\eyk}_Y^2
	\le \frac{2 \, \tau}{(\alphaaLB)^2} \sum_{k=1}^K \norm{\ryk}_{Y'}^2
	+ 2 \left( \frac{ \gammab}{\alphaaLB} \right)^2
	\tau \sum_{k=1}^K \norm{\euk}_U^2. \label{eq11_w}
\end{equation}
Furthermore, since the adjoint of the strong- and weak-constraint case are equivalent, we can directly use the bound~\eqref{eq11}. Using the inequalities \eqref{eq11_w} and \eqref{eq11} in \eqref{eq9_w},  invoking the definitions~\eqref{eq:bigR_w}, and noting that $(a^2 + b^2)^{1/2} \le \abs{a} + \abs{b}$, it follows that
\begin{align}
	\tau \sum_{k=1}^{K} \norm{\euk}_\cU^2 + & \tau \sum_{k=1}^K \norm{C \eyk}_D^2 \notag \\
	& \le \left[ \tilde{R}_u + \frac{\sqrt{2} \, \gammab}{\alphaaLB} \tilde{R}_p \right] \Big( \tau \sum_{k=1}^K \norm{\euk}_U^2 \Big)^{1/2}  \notag 
	\\ & \quad +\frac{2 \sqrt{2}}{\alphaaLB} \tilde{R}_y \tilde{R}_p + \frac{\sqrt{2} \, \gammac }{\alphaaLB}	\tilde{R}_y \Big( \tau \sum_{k=1}^K \norm{C \eyk}_D^2 \Big)^{1/2}.
	\label{eq:longineq2}
\end{align}
We again use Young's inequality to bound 
\begin{equation}
	\tilde{R}_y \frac{\sqrt{2} \, \gammac}{\alphaaLB}
	\Big(  \tau \sum_{k=1}^K \norm{C \eyk}_D^2 \Big)^{1/2}
	\le \frac{\gammac^2}{2 (\alphaaLB)^2} \tilde{R}_y^2
	+  \tau \sum_{k=1}^K \norm{C \eyk}_D^2,
	\label{eq:shorterineq2}
\end{equation}
and thereby eliminate the second term on the left-hand side of (\ref{eq:longineq2}) to obtain
\begin{multline}
	\tau \sum_{k=1}^{K} \norm{\euk}_\cU^2 \le \left[
	\tilde{R}_u + \frac{\sqrt{2} \, \gammab}{\alphaaLB} \tilde{R}_p \right] \Big( \tau \sum_{k=1}^K \norm{\euk}_U^2 \Big)^{1/2}  
	\\ + \frac{2 \sqrt{2}}{\alphaaLB} \tilde{R}_y \tilde{R}_p
	+ \frac{\gammac^2}{2 (\alphaaLB)^2} \tilde{R}_y^2.
\end{multline}
Using the definitions of $c_1(\mu)$ and $c_2(\mu)$ in~\eqref{eq:c1_weak} and~\eqref{eq:c2_weak}, respectively, we obtain
\begin{align}
	\tau \sum_{k=1}^{K} \norm{\euk}_\cU^2 -  2 \, c_1(\mu) \, \Big( \tau \sum_{k=1}^K \norm{\euk}_U^2 \Big)^{1/2} - c_2(\mu) \le 0,
\end{align}
The desired result follows again by using the larger root of the quadratic inequality as a bound for the error.

\end{proof}

The offline-online computational procedure in the weak-constraint case is analogous to the strong-constraint case discussed in Section~\ref{ssec:comp_strong} and therefore omitted. Note that we additionally require the constant $\gammab$ now, which is parameter-independent and can be computed by solving a generalized eigenproblem (similar to $\gammac$). For the Newton-CG method, we use the block-diagonal matrix ${\rm blkdiag}(\tau  \rmM, \ldots, \tau \rmM)$ as a preconditioner. 

\cbb{Similar to the strong-constraint case, we again cannot assess the tightness of the error bound~\refeq{eq:eb_weak} by providing an {\it a priori} upper bound for the associated effectivity. Instead, we present numerical results for the weak-constraint case also in Section~\ref{ssec:rom4dvar}.}

\subsection{Greedy Algorithm}
\label{ssec:greedy_weak}

The POD-Greedy sampling procedure to construct the reduced basis spaces $Y_N$ and $U_N$ in the weak-constraint case is very similar to the strong-constraint case. We summarize the procedure in Algorithm~\ref{alg:greedy_weak} and only comment on the differences.

First, since we assume in this section that the initial condition $y_0$ is known, we initialize the reduced basis space $Y_N$ with $y_0/\|y_0\|_Y$. Second, we additionally require the operator $\textrm{POD}_U(\{ v_k: k \in \xK \})$, which returns the largest POD mode with respect to the $(\cdot,\cdot)_U$ inner product (and normalized with respect to the $U$-norm). Also, $v^k_{{\textrm{proj}_U},N}(\mu)$ denotes the $U$-orthogonal projection of~$v^k(\mu)$ onto the reduced basis space $U_N$ and  $e^{u,k}_{\textrm{proj}_U,N}(\mu) = u^{*,k}(\mu) - u^{*,k}_{\textrm{proj}_U,N}(\mu)$ denotes the time history of the optimal model-error forcing. Since the model-error forcing  is time-dependent, we simply replace step~8 in Algorithm~\ref{alg:greedy_strong} with a POD-step and add only the largest POD mode $\zeta$ to $U_N$. We note that the POD modes $\zeta$ are orthogonal with respect to the $(\cdot,\cdot)_U$ inner product and that we now usually have $\dim(\cU_N) = N$ and $\dim(Y_N) = 2N + 1$ (due to the initial condition), i.e., the reduced basis space $U_N$ is enriched in every greedy step. Again, it is theoretically possible that $\dim(\cU_N) \le N$ and $\dim(Y_N) \le 2N + 1$, although we did not observe this case in the numerical results.

\begin{algorithm}
\begin{onehalfspace}
\caption{Sampling Procedure: Weak-constraint 4D-Var}
\label{alg:greedy_weak}
\begin{algorithmic}[1]
\STATE Choose $\Xi_\train \subset \cD$, $\mu^1 \in \Xi_\train$, $N_{\rm max}$, and
$\epsilon_\tolmin > 0$
\STATE Set $N \leftarrow 0$, \quad $Y_N \leftarrow \{ y_0/\|y_0\|_Y \}$, \quad $U_N \leftarrow \{ \}$
\STATE Set $\mu^* \leftarrow \mu^1$ \; and \; $\tilde{\Delta}_{N,{\rm rel}}^u(\mu^*) \leftarrow \infty$
\WHILE {$\tilde{\Delta}_{N,{\rm rel}}^u(\mu^*) > \epsilon_\tolmin$ {\bf and} $N \leq N_{\rm max}$}
\STATE $N \leftarrow N + 1$
\STATE $\zeta_1
= \textrm{POD}_Y\big(\big\{ e^{y,k}_{\textrm{proj},N-1}(\mu^*): k \in \xK
\big\}\big)$, \quad \ \ $Y_N \leftarrow Y_{N-1} \oplus \spn \{ \zeta_1 \}$
\STATE $\zeta_2
= \textrm{POD}_Y\big(\big\{ e^{p,k}_{\textrm{proj},N}(\mu^*): k \in \xK
\big\}\big)$, \quad \quad \ \ $Y_N \leftarrow Y_{N} \oplus \spn \{ \zeta_2 \}$
\STATE $\zeta_{\phantom{1}} = \textrm{POD}_U\big(\big\{ e^{u,k}_{\textrm{proj}_U,N-1}(\mu^*): k \in \xK
\big\}\big)$, \quad  $U_N \leftarrow U_{N-1} \oplus \spn \{ \zeta \}$
\STATE $\displaystyle \mu^* \leftarrow
\operatorname*{arg \, max}_{\mu \in \Xi_\train}\; \tilde{\Delta}_{N,{\rm rel}}^u(\mu)$
\ENDWHILE
\end{algorithmic}
\end{onehalfspace}
\end{algorithm}

\section{Combined 4D-Var formulation}
\label{sec:comb}

We now combine the results from the previous two sections and consider the classical 4D-Var data assimilation problem including model error. 



\subsection{Problem statement}

For a given parameter $\mu \in \cD$, we now consider the minimization problem
\begin{gather}
\begin{aligned}
	\label{fe_comb}
	& \min_{y \in Y^K, \, u \in U^{K+1}} J(y,u;\mu)  \quad \text{s.t.} \quad y \in Y^K \quad\text{solves} \\ & m(y^k,v) + \tau \,
	a(y^k,v;\mu) = m(y^{k-1},v) + \tau \, b(u^k,v) + \tau \, f(v)  \\ & \hspace{3in} \forall v \in Y,
	\ \forall k \in \xK, 
\end{aligned}
\end{gather}
with initial condition $m(y^0,v) = m(u^0,v)$ for all $v \in Y,$ and cost functional $J(\cdot,\cdot;\mu): Y^K \times U^{K+1} \to \xR$ given by 
\begin{equation}
\label{cost_fe_comb}
 J(y,u;\mu) = \frac{1}{2} \norm{u^0 - u_d^0}_{U}^2 +  \frac{\tau}{2} \sum_{k=1}^K \norm{u^k - u_d^k}_{U}^2	+ \frac{\tau}{2} \sum_{k=1}^K \norm{C y^k - z_d^k}^2_{D}.
\end{equation}
In addition to the error between the predicted and observed outputs, the cost functional now contains the deviation of the initial condition from the background state, $u_d^0 \in U,$ as well as  the model error for all time steps. As mentioned earlier, in the data assimilation context we usually have $u_d^0 \neq 0$ and $u_d^k = 0, \, 1 \leq k \leq K$, i.e. the background state is nonzero whereas the model error is assumed to have zero mean. 

The associated necessary and sufficient first-order optimality conditions are thus: Given $\mu \in \cD$, the optimal solution $(y^*,p^*,u^*) \in Y^K \times Y^K \times U^{K+1}$ satisfies
\begin{subequations}
\label{OS_fe_comb}
\begin{align}
	m(y^{*,k} - y^{*,k-1},\phi) + \tau \,  a ( y^{*,k},\phi;\mu)  & = \tau \, b(u^k,\phi) + \tau \, f(\phi) \hspace{-10ex} \notag \\ 
	& && \forall \phi \in Y, \ \forall k \in \xK, \label{OS_fe_comb:1} \\ 
	m(y^{*,0},\phi) & = m(u^0,\phi) && \forall \phi \in Y \label{OS_fe_comb:2} \\ 
	m(\varphi, p^{*,k} - p^{*,k+1}) + \tau \,  a (\varphi,p^{*,k};\mu)  & =
	\tau \,  (z_d^k - C y^{*,k}, C \varphi)_{D} \hspace{-10ex} \notag \\
	& && \forall \varphi \in Y, \ \forall k \in \xK, \label{OS_fe_comb:3} \\
	\tau \, (u^{*,k} - u_d^k,\psi)_U  - \tau \,  b(\psi, p^{*,k}) & = 0 && 
	\forall \psi \in U, \ \forall k \in \xK, \label{OS_fe_comb:4}  \\
	(u^{*,0} - u_d^0,\psi)_U  - m(\psi, p^{*,1}) & = 0 && 
	\forall \psi \in U, \label{OS_fe_comb:5}
\end{align}
\end{subequations}
where the final condition of the adjoint is given by $p^{*,K+1} = 0$.

\subsection{Reduced basis approximation and error estimation}

The reduced-order problem follows directly from~\eqref{fe_comb} and~\eqref{cost_fe_comb} by restricting the state, adjoint, and control spaces to their respective reduced basis spaces. We again introduce an integrated space $Y_N$ for the state and adjoint, and two separate spaces for the ``control,'' i.e., $U_N^0$ for the initial condition $u_N^0$ and $U_N$ for the model error $u_N^k, \, k \in \xK$. The greedy procedure to generate these spaces simply combines the algorithms introduced in Sections~\ref{ssec:greedy_strong} and~\ref{ssec:greedy_weak}.

For any given $\mu \in \cD$, we can now state the reduced-order minimization problem as follows
\begin{gather}
\begin{aligned}
	\label{rb_comb}
	& \min_{y_N \in Y_N^K, \, u_N \in U_N^0 \times U_N^K} J(y_N,u_N;\mu) \quad \text{s.t.}
	\quad y_N \in Y_N^K \quad\text{solves} \\
	& m(y_N^k,v) + \tau \, a(y_N^k,v;\mu) = m(y_N^{k-1},v) + \tau \, b(u_N^k,v) + \tau \, f(v)  \\
	& \hspace{3in} \forall v \in Y_N, \ \forall k \in \xK,
\end{aligned}
\end{gather}
with initial condition $m(y_N^0,v) = m(u_N^0,v)$ for all $v \in Y_N$. The reduced-order optimality system directly follows from~\eqref{OS_fe_comb} and is thus omitted.

The \textit{a~posteriori} error bound result is a combination of the strong- and weak-constraint case.  In addition to the residuals of the state $\rykt$, adjoint $\rpkt$, and model error $\rukt$ defined in~\eqref{res_y_weak}, \eqref{res_p_weak}, and \eqref{res_u_weak}, we also require the residual 
\begin{equation}
r_u^0(\psi;\mu) = m(\psi,p_N^{*,1}) - (u_N^{*,0} - u_d^0, \psi)_\cU
	\quad  \forall \psi \in U.
\end{equation}
The \textit{a~posteriori} error bound is given in the following proposition.

\begin{proposition}\label{prop:P3}
Let $u^{*,k}$ and $\uNoptk$ be the optimal solutions of the full-order and reduced-order 4D-Var problems~\refeq{fe_comb} and \refeq{rb_comb}, respectively. The error satisfies
\begin{multline} 
\Big( \norm{u^{*,0} - u_N^{*,0}}_U^2 + \tau \, \sum_{k=1}^{K} \norm{u^{*,k} - \uNoptk}_U^2 \Big)^{1/2} \\ \leq  \hat{\Delta}_N^u(\mu) := c_1(\mu) + \sqrt{c_1(\mu)^2 + c_2(\mu)} \quad \forall \mu \in \cD,
 \label{eq:eb_comb}
\end{multline}
where $c_1(\mu)$ and $c_2(\mu)$ are given by
\begin{equation}
c_1(\mu) = \frac{1}{2} \left( \left( \norm{r_u^0(\cdot;\mu)}_{U'}^2 + \tilde{R}_u^2 \right)^{1/2}
	+ \left(\frac{2 \, \gammab^2}{(\alphaaLB)^2} + \frac{1}{\alphaaLB} \right)^{1/2} \tilde{R}_p \right) \label{eq:c1_comb} 
\end{equation}
and
\begin{equation} c_2(\mu) =  \left( \frac{2\sqrt{2}}{\alphaaLB} \tilde{R}_y \tilde{R}_p
	+ \frac{\gammac^2}{2 (\alphaaLB)^2} \tilde{R}_y^2 \right).
\label{eq:c2_comb}
\end{equation}
\end{proposition}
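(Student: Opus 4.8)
The plan is to merge the two arguments already carried out: the combined optimality system~\eqref{OS_fe_comb} couples the initial-condition control of Proposition~\ref{prop:P1} with the distributed model-error control of Proposition~\ref{prop:P2}, so the proof should splice their estimates together. First I would subtract the reduced-order system induced by~\eqref{rb_comb} from~\eqref{OS_fe_comb} and insert the residual definitions to obtain four error--residual equations: a primal equation carrying \emph{both} the forcing $\tau\,b(\euk,\phi)$ and a nonzero initial error, an adjoint equation identical to the weak case, the distributed model-error equation $\tau(\euk,\psi)_\cU-\tau\,b(\psi,\epk)=\tau\,\rukt(\psi;\mu)$, and the scalar initial-condition equation $(e_u^0,\psi)_\cU-m(\psi,e_p^1)=r_u^0(\psi;\mu)$, where $e_u^0:=u^{*,0}-u_N^{*,0}$. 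The essential bookkeeping change from Proposition~\ref{prop:P2} is that now $e_y^0=e_u^0$ rather than $e_y^0=0$.

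Next I would test the primal equation with $\phi=\epk$, the adjoint with $\varphi=\eyk$, the model-error equation with $\psi=\euk$, and the initial-condition equation with $\psi=e_u^0$, sum the time-indexed equations over $k\in\xK$, and form the combination (adjoint)$+$(model error)$+$(initial condition)$-$(primal). As in Proposition~\ref{prop:P1} the coercivity terms cancel, the two $b(\euk,\epk)$ contributions cancel, and the mass terms telescope; because $e_y^0=e_u^0$ the surviving boundary term $m(e_u^0,e_p^1)$ is exactly annihilated by the one from the initial-condition equation. This produces the combined energy identity
\[
\norm{e_u^0}_\cU^2 + \tau\sum_{k=1}^K\norm{\euk}_\cU^2 + \tau\sum_{k=1}^K\norm{C\eyk}_D^2 = \tau\sum_{k=1}^K\big(\rpkt(\eyk;\mu)+\rukt(\euk;\mu)-\rykt(\epk;\mu)\big) + r_u^0(e_u^0;\mu),
\]
whose left-hand side already contains the composite control norm $E_u:=\big(\norm{e_u^0}_\cU^2+\tau\sum_{k=1}^K\norm{\euk}_\cU^2\big)^{1/2}$ of~\eqref{eq:eb_comb}. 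Bounding the right-hand side by Cauchy--Schwarz, the two control-residual terms $\tau\sum\rukt(\euk)$ and $r_u^0(e_u^0)$ merge, via Cauchy--Schwarz in $\xR^2$, into $(\norm{r_u^0}_{\cU'}^2+\tilde{R}_u^2)^{1/2}E_u$, which is the first contribution to $c_1$ in~\eqref{eq:c1_comb}.

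It remains to control the primal and adjoint energies. The adjoint is unchanged, so I would reuse the bound~\eqref{eq11} verbatim. For the primal, the new ingredient is a spatio-temporal energy estimate that simultaneously carries the distributed forcing \emph{and} the nonzero initial datum: testing the primal error--residual equation with $\phi=\eyk$, applying coercivity, and splitting the residual and the $b$-forcing by Young's inequality (using $\gammab$) as in~\eqref{eq5}--\eqref{eq11}, the telescoped initial mass norm now survives as $\norm{e_u^0}_\cU^2$, yielding
\[
\tau\sum_{k=1}^K\norm{\eyk}_Y^2 \le \frac{1}{\alphaaLB}\norm{e_u^0}_\cU^2 + \frac{2}{(\alphaaLB)^2}\tilde{R}_y^2 + \frac{2\gammab^2}{(\alphaaLB)^2}\,\tau\sum_{k=1}^K\norm{\euk}_\cU^2 .
\]
Substituting this into the Cauchy--Schwarz bound $\tilde{R}_p\,(\tau\sum_k\norm{\eyk}_Y^2)^{1/2}$ and splitting the square root, the resulting $\norm{e_u^0}_\cU$- and $(\tau\sum_k\norm{\euk}_\cU^2)^{1/2}$-terms, with coefficients $\tilde{R}_p/\sqrt{\alphaaLB}$ and $\sqrt{2}\,\gammab\tilde{R}_p/\alphaaLB$, combine, again by Cauchy--Schwarz in $\xR^2$, into $\big(\tfrac{1}{\alphaaLB}+\tfrac{2\gammab^2}{(\alphaaLB)^2}\big)^{1/2}\tilde{R}_p\,E_u$, completing $c_1$.

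Finally, the two $\tilde{R}_y\tilde{R}_p$ terms (one from the primal-residual/adjoint-bound pairing, one from the adjoint-residual/primal-bound pairing) add to $\tfrac{2\sqrt{2}}{\alphaaLB}\tilde{R}_y\tilde{R}_p$, and the leftover $\gammac$-weighted data-misfit term $\tfrac{\sqrt{2}\,\gammac}{\alphaaLB}\tilde{R}_y(\tau\sum_k\norm{C\eyk}_D^2)^{1/2}$ is handled by Young's inequality exactly as in~\eqref{eq:shorterineq2}, cancelling $\tau\sum_k\norm{C\eyk}_D^2$ on the left and leaving $\tfrac{\gammac^2}{2(\alphaaLB)^2}\tilde{R}_y^2$; together these give $c_2$ as in~\eqref{eq:c2_comb}. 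One then arrives at $E_u^2-2c_1(\mu)E_u-c_2(\mu)\le0$ and takes the larger root. The main obstacle is not the bookkeeping inherited from Propositions~\ref{prop:P1} and~\ref{prop:P2} but rather the two $\xR^2$ Cauchy--Schwarz steps that fuse the separate initial-condition and model-error contributions into the single composite norm $E_u$ while producing exactly the square-root coefficient structure of $c_1$; getting these poolings to match~\eqref{eq:c1_comb} (rather than a looser sum of two bounds) is the delicate point.
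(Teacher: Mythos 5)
Your proposal is correct and follows exactly the route the paper intends: the paper omits the proof of Proposition~\ref{prop:P3}, stating only that it follows by combining the proofs of Propositions~\ref{prop:P1} and~\ref{prop:P2}, and your splicing of the two arguments --- the telescoping that cancels $m(e_u^0,e_p^1)$, the primal energy bound carrying both the $\gammab$-forcing and the surviving initial term $\norm{e_u^0}_U^2$, and the two $\xR^2$ Cauchy--Schwarz poolings --- reproduces the stated constants $c_1(\mu)$ and $c_2(\mu)$ exactly.
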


The proof follows from the proofs of Propositions~\ref{prop:P1} and~\ref{prop:P2} and is thus omitted. The offline-online decomposition is analogous to our previous discussion in Section~\ref{ssec:comp_strong}.

\section{Numerical results}
\label{sec:results}


\subsection{Problem description}

We consider the dispersion of a pollutant governed by a convection-diffusion equation with a Taylor-Green vortex velocity field. The concentration of the pollutant is measured at five spatial locations over time. The computational domain is $\Omega = (-1,1)^2$ and we assume homogeneous Dirichlet boundary conditions on the lower boundary $\Gamma_D$ and homogeneous Neumann boundary conditions on the remaining boundary $\Gamma_N$. The P\'{e}clet number serves as our parameter, i.e., we have $\mu = {\rm Pe} \in \cD = [10,50]$. The bilinear form $a$ is thus given by
\begin{equation}
	a(w,v;\mu) = \frac{1}{\mu} \int_\Omega \nabla w \cdot \nabla v \dx
	+ \int_\Omega (\beta \cdot \nabla w) v \dx,
\end{equation}
and the velocity field is $\beta(x) = ( \sin(\pi x_1) \cos(\pi x_2), - \cos(\pi x_1) \sin(\pi x_2) )^T$. The domain $\Omega$ with measurement sites as well as the velocity field are sketched in Figure~\ref{fig:domain}. \cbb{Our model problem is motivated by the source reconstruction of a (possibly) accidental release of an agent, where the velocity field is known~\cite{Krysta2006,Krysta2007}. Although we consider a fixed velocity field here, our problem formulation also directly applies to (affinely) parametrized velocity fields.}

We do not consider an additional forcing term and thus set $f \equiv 0$. The inner product on $Y_\rme = \{ v \in H^1(\Omega): v|_{\Gamma_D} \equiv 0 \}$ is defined as $(w,v)_Y = \frac{1}{2} a(w,v;\muref) + \frac{1}{2} a(v,w;\muref)$ for the reference parameter $\muref = 30$. Since $\beta$ is divergence-free and $\beta \cdot n \equiv 0$ on $\Gamma$, one can show that
$a$ is coercive and that the symmetric part of $a$ is given by $1 / \mu \int_\Omega \nabla w \cdot \nabla v \dx$. Hence we can use the min-theta approach to construct a coercivity lower bound: $\alphaaLB := \muref / \mu$. For details, we refer to Appendix B.3 of \cite{Kaercher2016}.

\begin{figure}[ht]
\resizebox{0.55\textwidth}{!}{%
\begin{tikzpicture}
	
	\definecolor{sensorcolor1}{RGB}{  0, 114, 189}
	\definecolor{sensorcolor2}{RGB}{217,  83,  25}
	\definecolor{sensorcolor3}{RGB}{237, 177,  32}
	\definecolor{sensorcolor4}{RGB}{126,  47, 142}
	\definecolor{sensorcolor5}{RGB}{119, 172,  48}
	
	\draw[step=1cm, loosely dotted] (-2,-2) grid (2,2);
	
	\draw [<->] (-2,2.2) node (yaxis) [above] {\small $x_1$}
	|- (2.2,-2) node (xaxis) [right] {\small $x_2$};
	\draw ( 2,-2) node [below] {\small 1};
	\draw (-2, 2) node [left]  {\small 1};
	\draw (-2,-2) node [below] {\small -1};
	\draw (-2,-2) node [left]  {\small -1};
	
	\draw ( 0,-2) node [below] {\small $\Gamma_D$};
	\draw (-2, 0) node [left]  {\small $\Gamma_N$};
	\draw ( 2, 0) node [right] {\small $\Gamma_N$};
	\draw ( 0, 2) node [above] {\small $\Gamma_N$};
	
	\draw (-2,-2) -- (2,-2) -- (2,2) -- (-2,2) -- (-2,-2);
	
	\draw (-0.2,1.6) node [inner sep=0pt,minimum size=1.4mm,shape=circle,
	draw=sensorcolor1!75,fill=sensorcolor1!75,thick,label=right:{}] {};
	
	\draw (-1.2,1.2) node [inner sep=0pt,minimum size=2mm,shape=rectangle,
	draw=black,fill=sensorcolor1,thick,label=right:{1}] {};
	
	\draw (1.2,1.2) node [inner sep=0pt,minimum size=2mm,shape=rectangle,
	draw=black,fill=sensorcolor2,thick,label=right:{2}] {};
	
	\draw (1.2,-1.2) node [inner sep=0pt,minimum size=2mm,shape=rectangle,
	draw=black,fill=sensorcolor3,thick,label=right:{3}] {};
	
	\draw (-1.2,-1.2) node [inner sep=0pt,minimum size=2mm,shape=rectangle,
	draw=black,fill=sensorcolor4,thick,label=right:{4}] {};
	
	\draw (0,0) node [inner sep=0pt,minimum size=2mm,shape=rectangle,
	draw=black,fill=sensorcolor5,thick,label=right:{5}] {};
\end{tikzpicture}
} 
\hfill
\raisebox{0.9mm}{\includegraphics[width=0.45\textwidth]{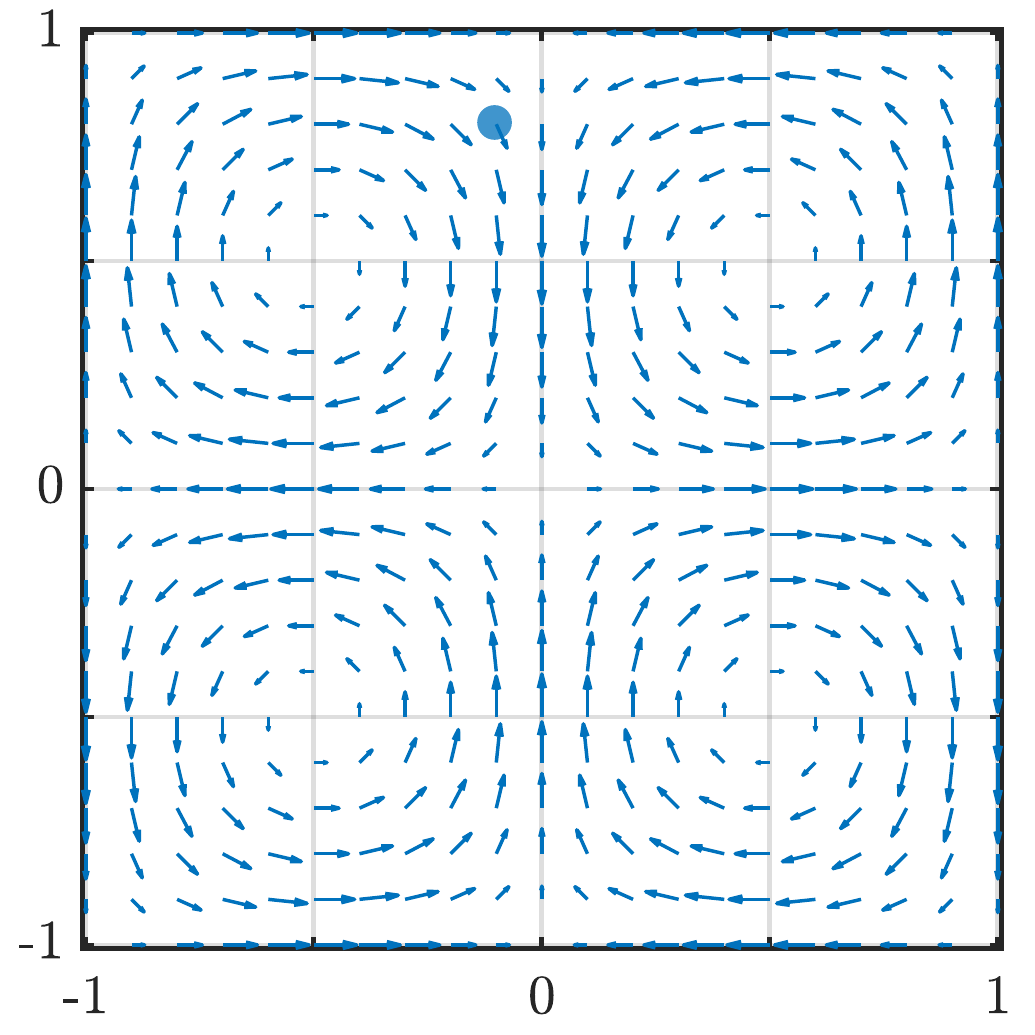}}
\caption{Left: Sketch of the computational domain with measurement locations $\Omega_1,\dots,\Omega_5$. The centers of the sensors are located at $(\pm 0.6,\pm 0.6)^T$ and $(0,0)^T$; their width and height is 0.1. The colors match those in Figure~\ref{fig:outputs}. Right: Plot of the Taylor-Green vortex velocity field. The blue dot indicates the center $(-0.1,0.8)^T$ of the Gaussian serving as initial condition.}
\label{fig:domain}
\end{figure}

We choose the time interval $I = [0,8]$ and a time step size $\tau = 0.04$ resulting in $K = 200$ time steps. For the space discretization we introduce a spatial mesh with an element size of $h = 0.04$ and corresponding linear finite element approximation spaces $Y=U$ with $\cN_Y = \cN_U = 13,131$ degrees of freedom. We assume that the (unknown true) initial condition $y_0^\textrm{true}$ is given by a spatial Gaussian function with mean $(-0.1,0.8)^T$ and covariance matrix $\sigma^2 \xI$, where $\sigma = 0.1$ and $\xI$ is the identity matrix (the center of the Gaussian is shown as a blue dot in Figure~\ref{fig:domain}). The average concentration over the measurement domains shown in Figure~\ref{fig:domain} serve as our five outputs $h_i(\phi) = \abs{\Omega_i}^{-1} \int_{\Omega_i} \phi \dx$, $i=1,\dots,5$. We then generate noisy measurements by adding white noise to the outputs computed from the full-order model for the (unknown true) parameter $\mu^\textrm{true} = 30$ with initial condition $y_0^\textrm{true}$ such that $z_d^k = C y^{k,\textrm{true}} + \eta^k$, where $\eta^k \in \xR^5, \ k \in \xK,$ is a vector containing uncorrelated Gaussian noise in each entry, i.e, $\eta_i^k \sim N(0,0.05^2), \ i = 1,\ldots,5, \ k \in \xK$ . The inverse observation covariance matrix is given by $\rmD = 10 \xI$. \cbb{In practice, the choice 10 produces acceptable results for the 4D Var problem (a thorough discussion of the impact of Tychonov regularization on 4D-Var is beyond the scope of this paper, we refer to~\cite{Puel2009} for more details).} In the strong-constraint case, we assume an optimal prior and set the prior mean $u_d$ to be equal to the true initial condition. In the weak-constraint case, we set $b(\cdot,\cdot) = m(\cdot,\cdot)$ to account for the model-error forcing and $u_d^k = 0, \ k \in \xK,$ i.e. the model-error forcing is assumed to be unbiased and have zero mean. In both cases, the inverse prior covariance matrix $\rmU$ is given by the mass matrix.

\begin{figure}[ht]
	\begin{tabular}{lccc}
		& $\mu=10$ & \hspace*{-1cm} $\mu^\textrm{true}=30$ & \hspace*{-1cm} $\mu=50$ \\
		\raisebox{1.4cm}{\rotatebox[origin=c]{90}{$k=20$}} &
		\hspace*{-0.5cm} \includegraphics[width=0.35\textwidth]{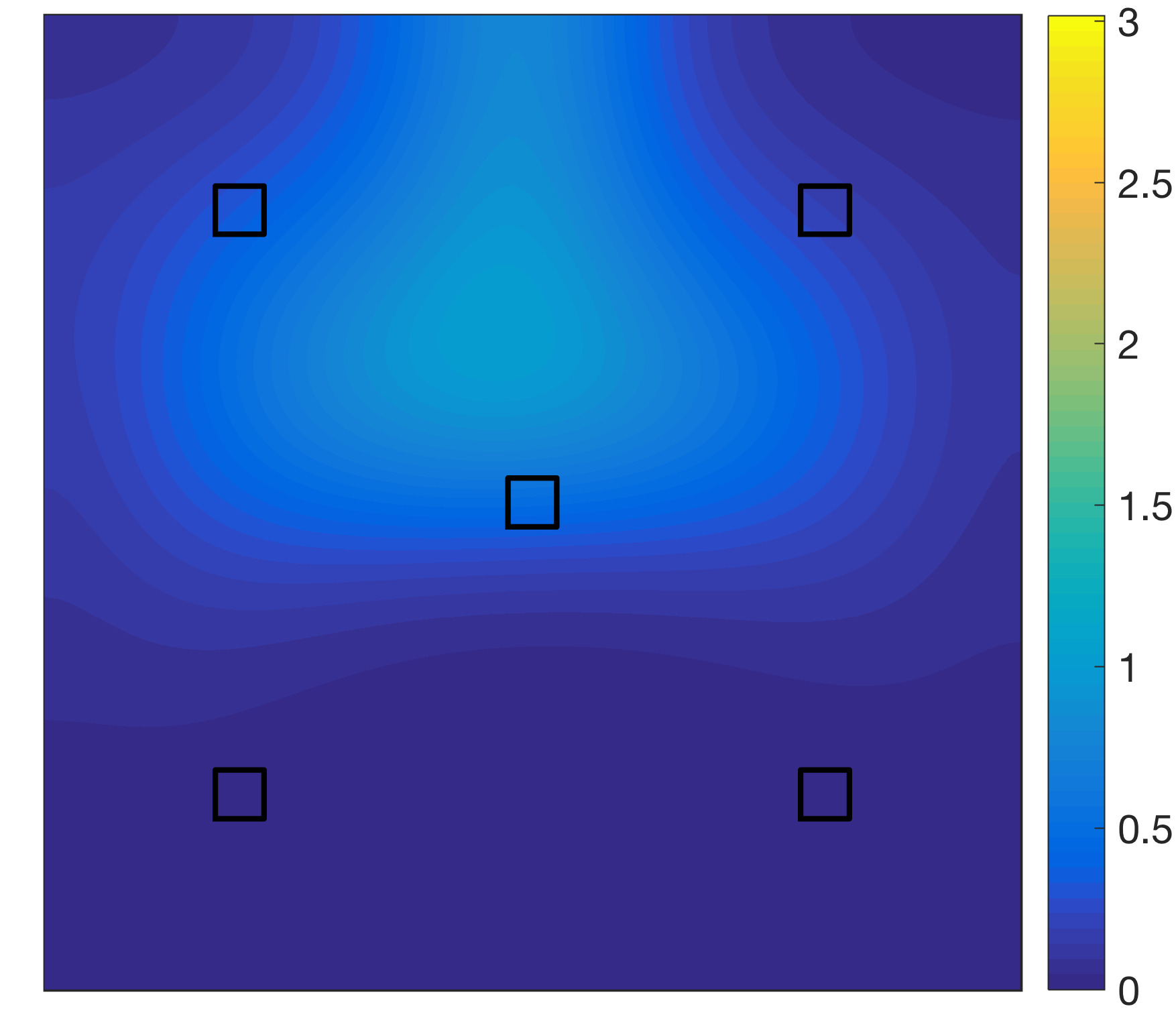} &
		\hspace*{-1cm} \includegraphics[width=0.35\textwidth]{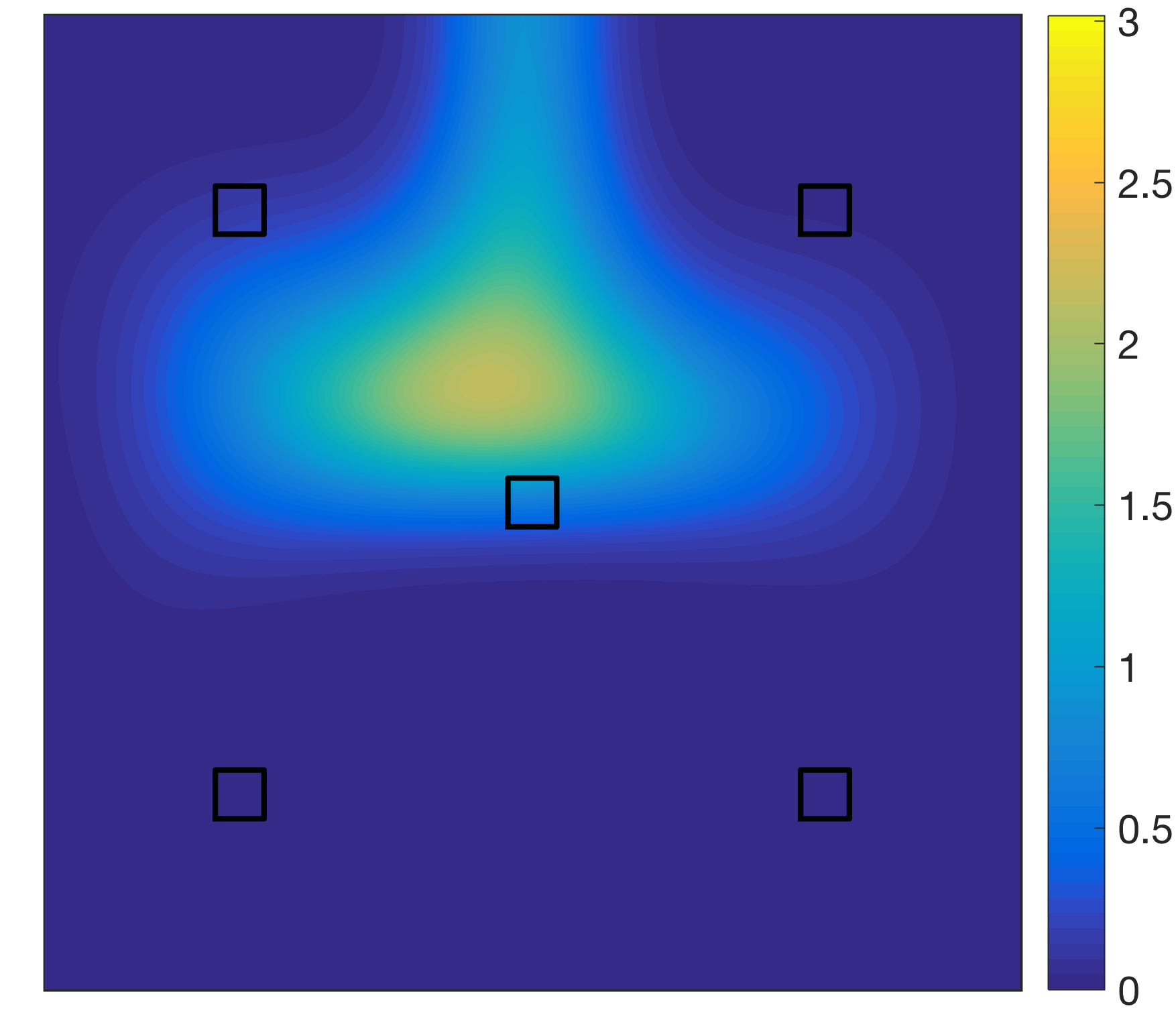} &
		\hspace*{-1cm} \includegraphics[width=0.35\textwidth]{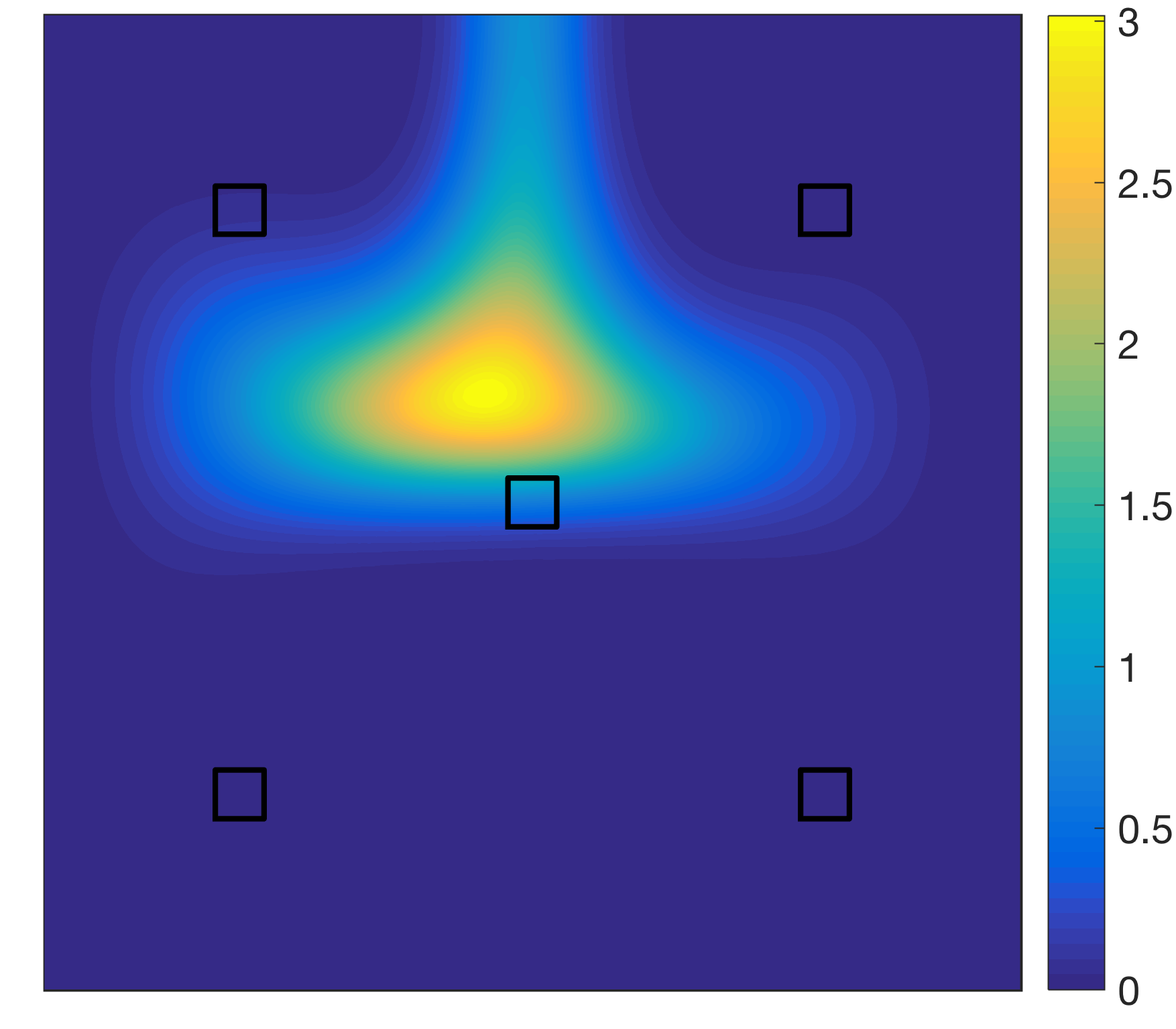} \\
		\raisebox{1.4cm}{\rotatebox[origin=c]{90}{$k=40$}} &
		\hspace*{-0.5cm} \includegraphics[width=0.35\textwidth]{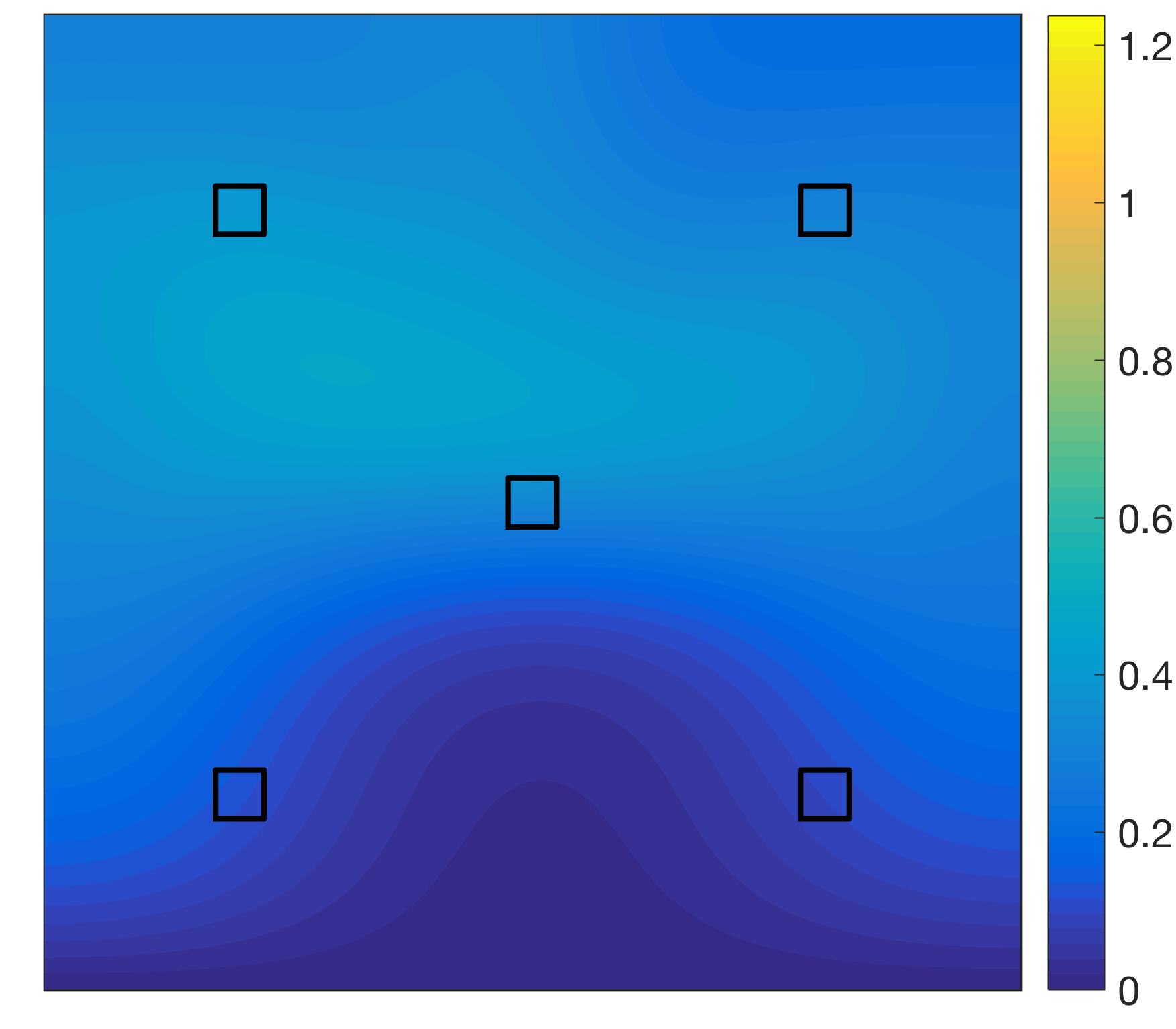} &
		\hspace*{-1cm} \includegraphics[width=0.35\textwidth]{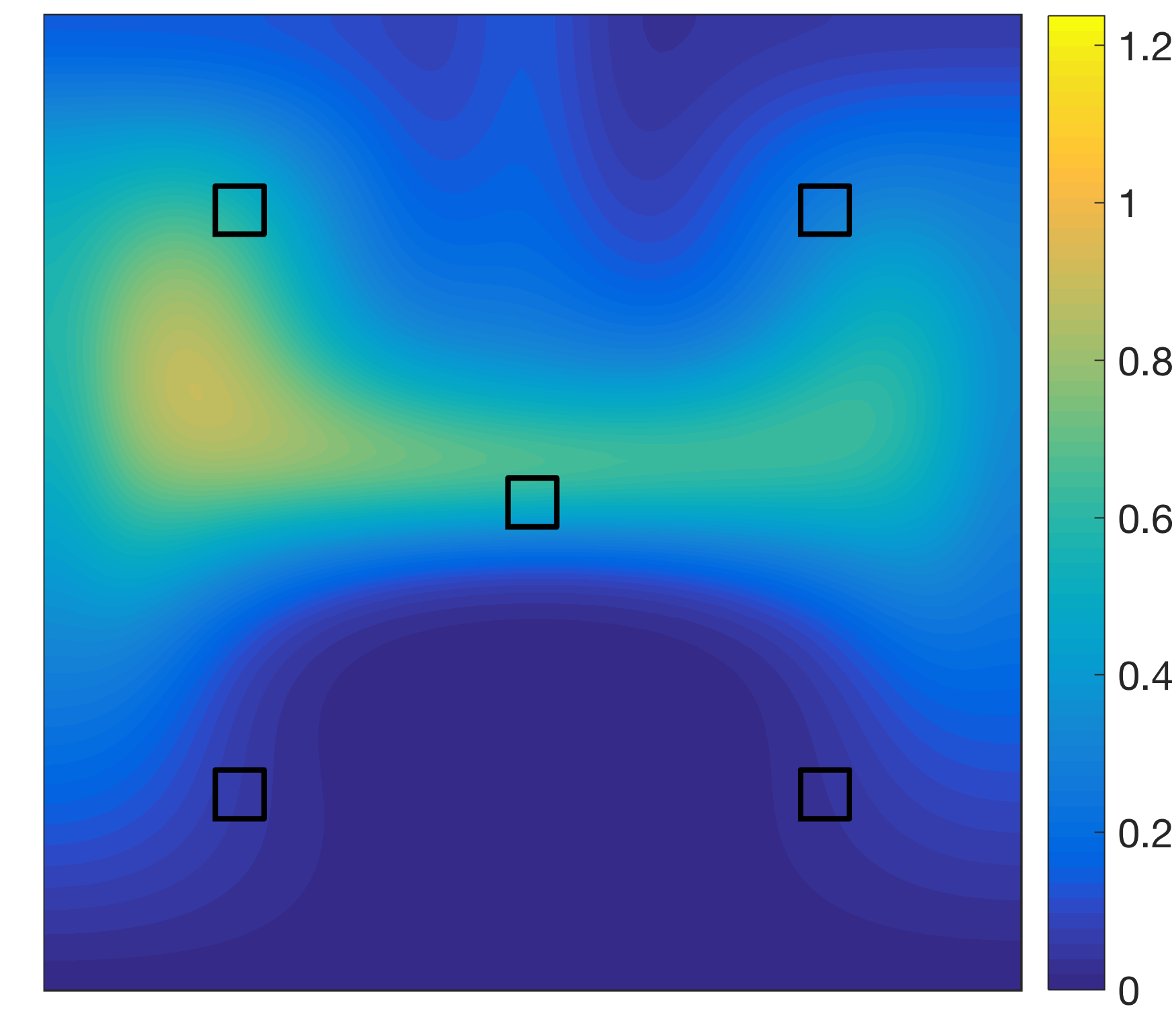} &
		\hspace*{-1cm} \includegraphics[width=0.35\textwidth]{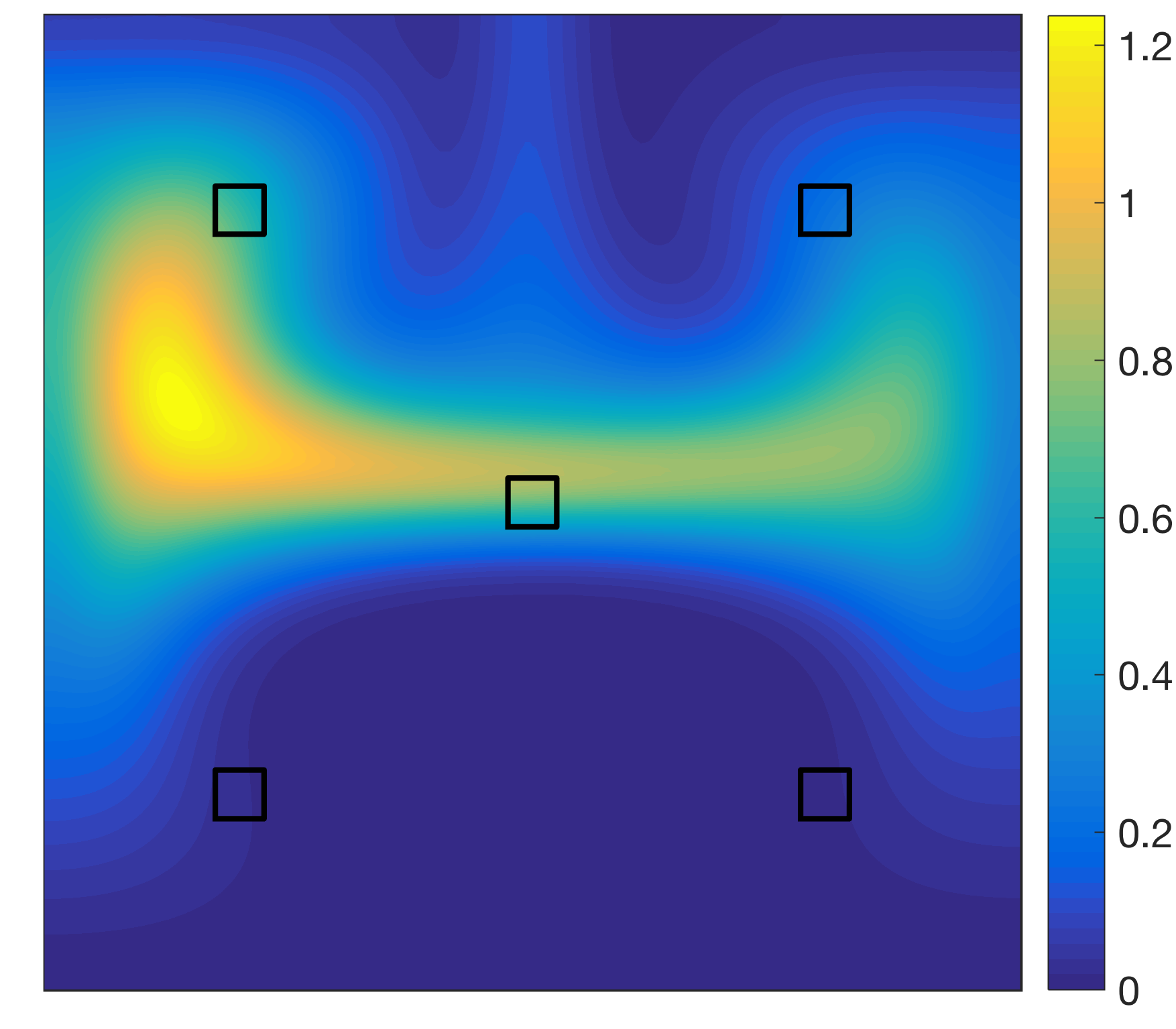} \\
		\raisebox{1.4cm}{\rotatebox[origin=c]{90}{$k=80$}} &
		\hspace*{-0.5cm} \includegraphics[width=0.35\textwidth]{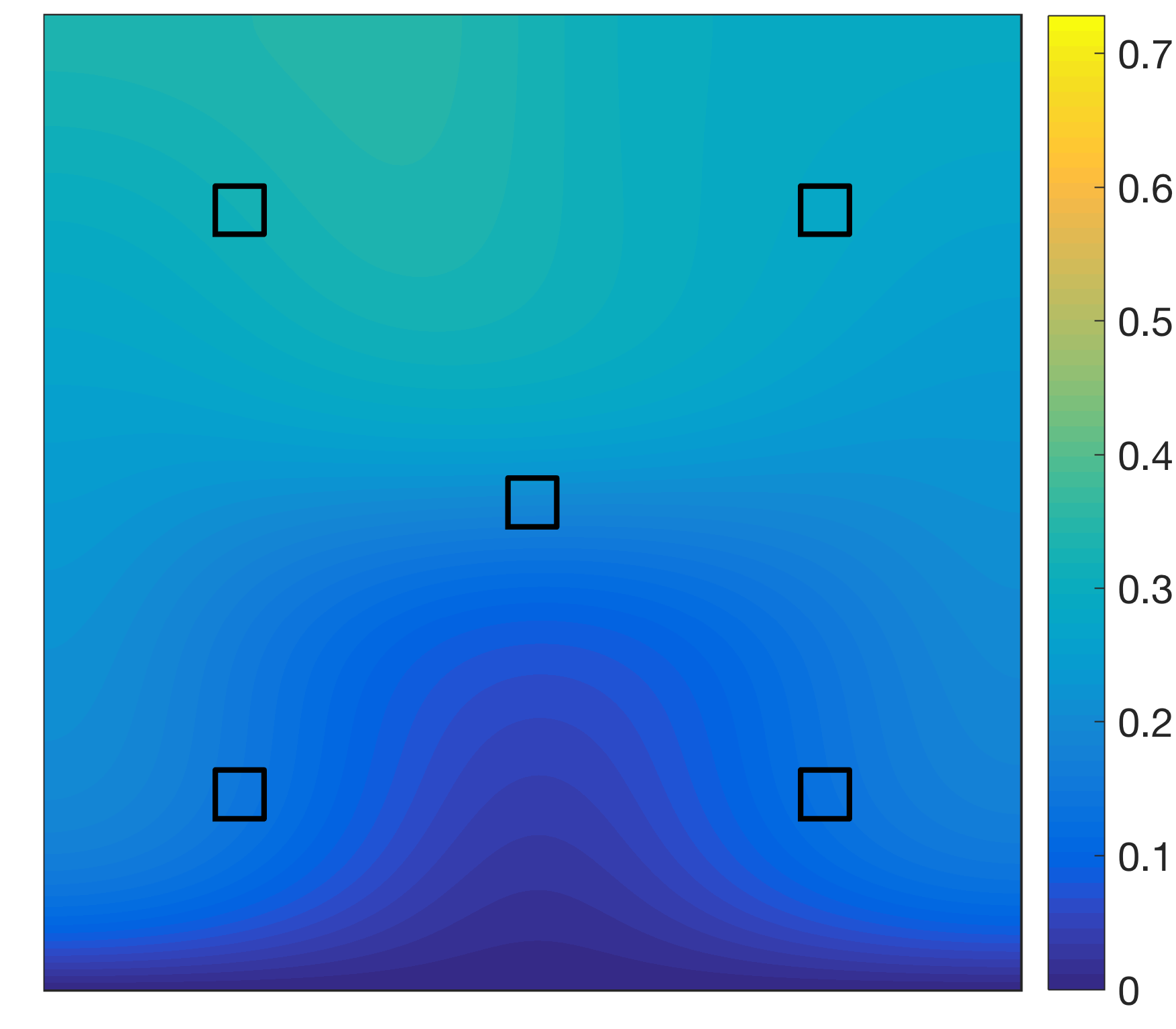} &
		\hspace*{-1cm} \includegraphics[width=0.35\textwidth]{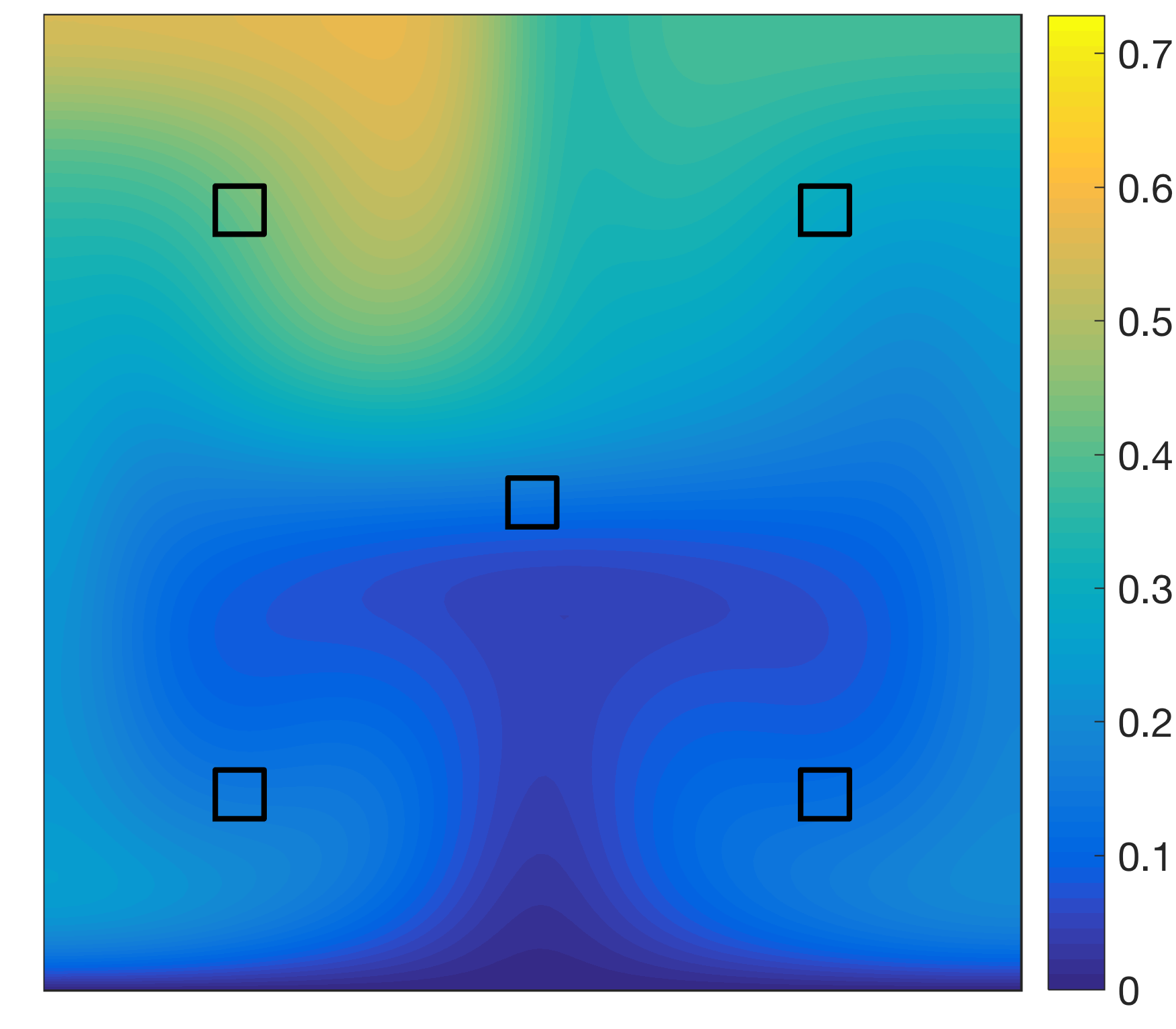} &
		\hspace*{-1cm} \includegraphics[width=0.35\textwidth]{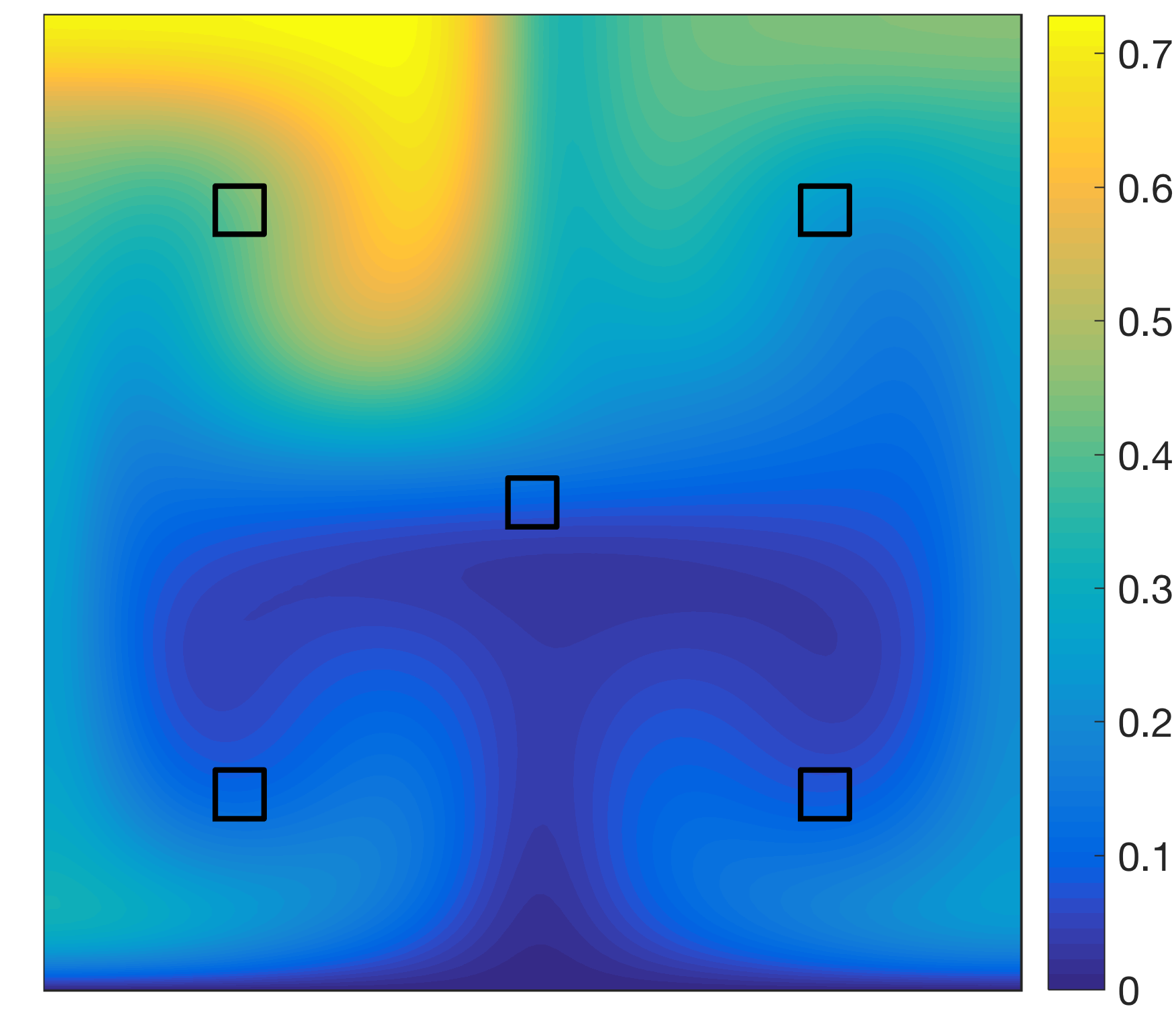} \\
		\raisebox{1.4cm}{\rotatebox[origin=c]{90}{$k=160$}} &
		\hspace*{-0.5cm} \includegraphics[width=0.35\textwidth]{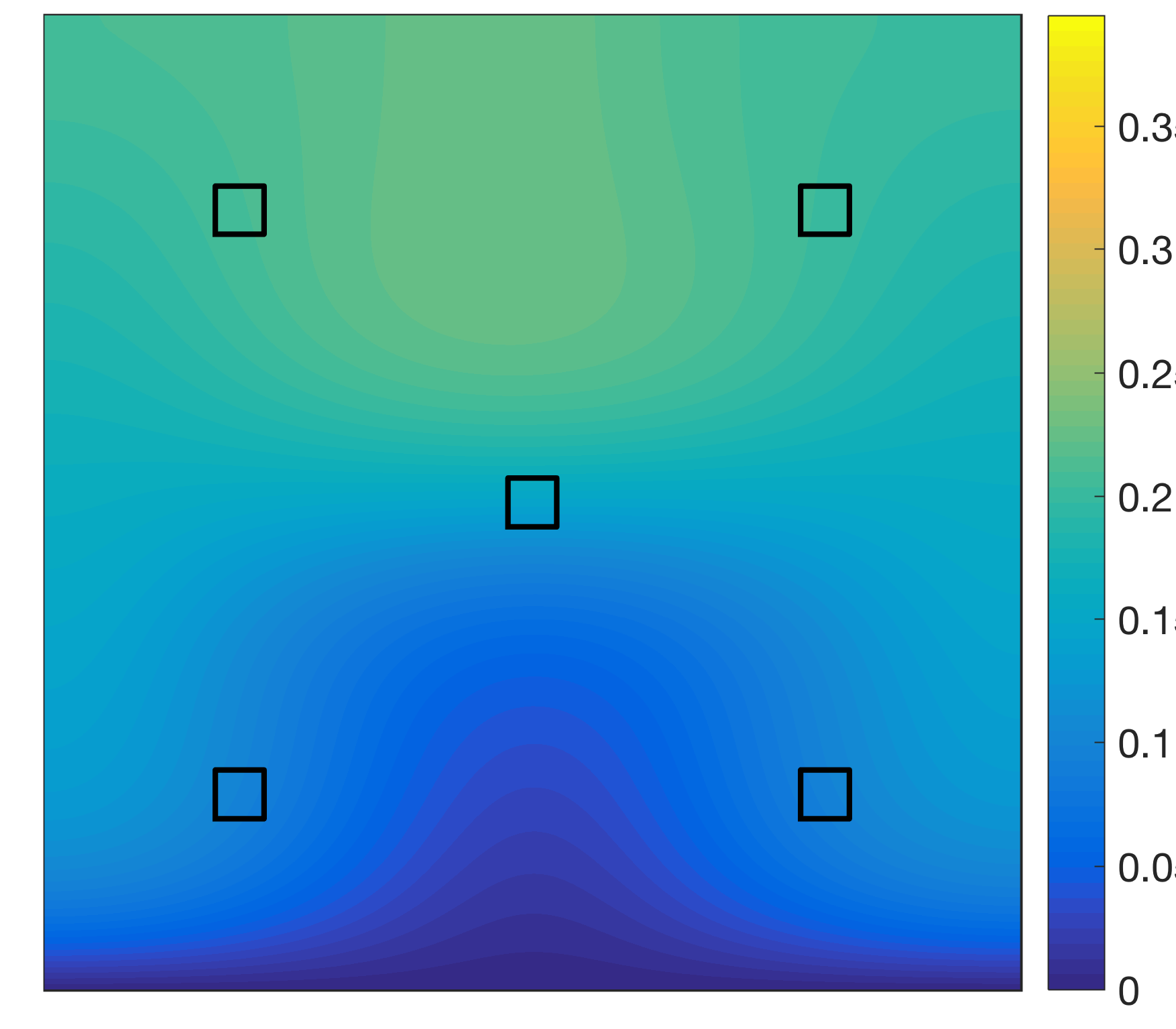} &
		\hspace*{-1cm} \includegraphics[width=0.35\textwidth]{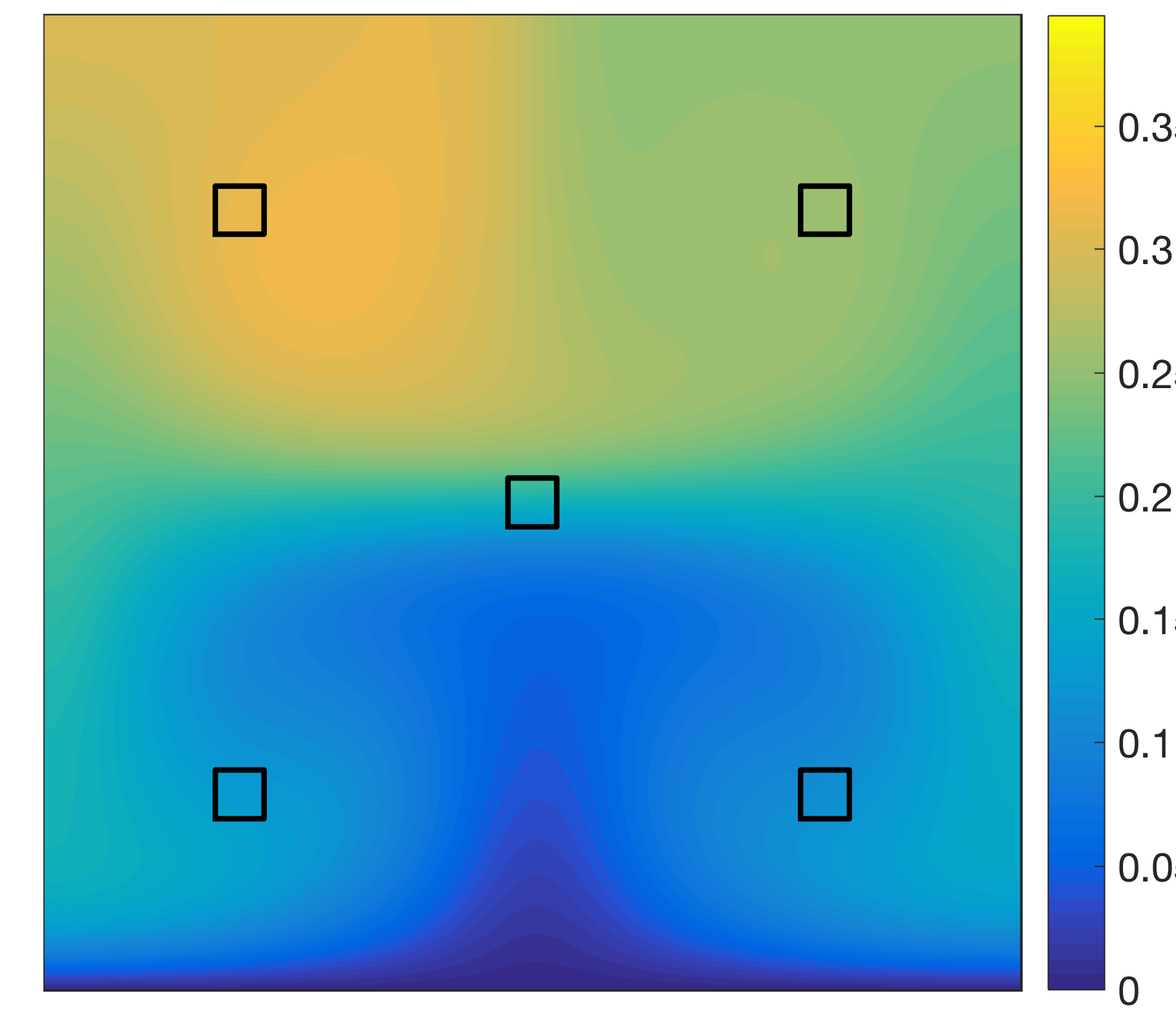} &
		\hspace*{-1cm} \includegraphics[width=0.35\textwidth]{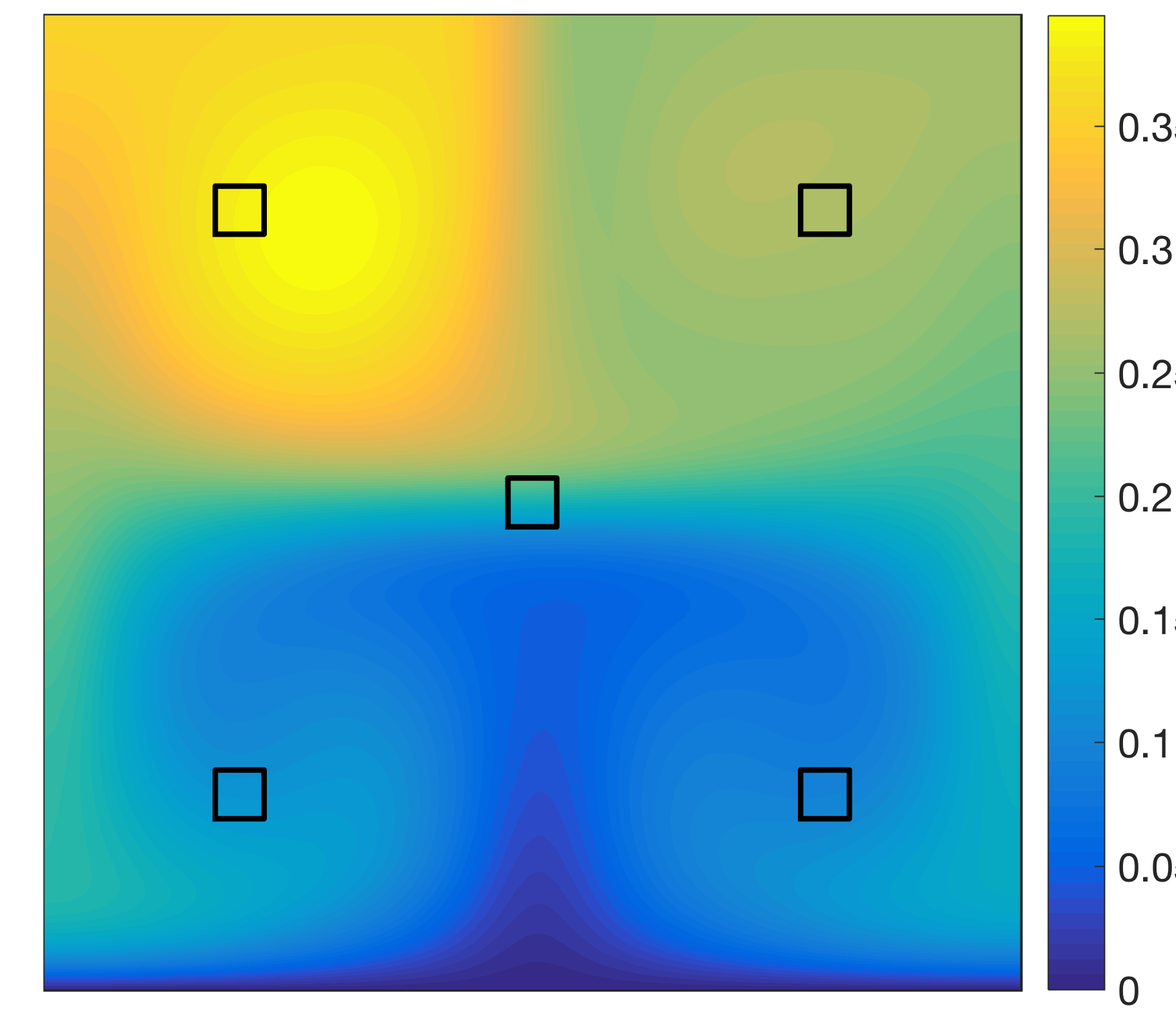}
	\end{tabular}
	\caption{State solution for the true initial condition $y_0^\textrm{true}$ and for three different parameters~$\mu$.}
	\label{fig:state}
\end{figure}

A preconditioned Newton-CG method takes between $30$ seconds for $\mu = 10$ (requiring 31 CG iterations) and $54$ seconds for $\mu = 50$ (requiring 56 CG iterations) to solve the full-order strong-constraint 4D-Var problem. For the weak-constraint case, the solution time ranges from $114$ seconds ($\mu = 10$, 81 CG iterations) to $189$ seconds ($\mu = 50$, 137 CG iterations). In Figure~\ref{fig:state}, we plot the concentration of the pollutant for three different parameter values and various timesteps. The influence of the Taylor-Green vortex and the P\'{e}clet number on the solutions is clearly visible. In Figure~\ref{fig:outputs} on the left, we plot the five true outputs $C y^{k,\textrm{true}}$ over time (the numbering and color of the curves refer to the sketch in Figure~\ref{fig:domain}). The corresponding noisy measurements $z_d^k$ used for the data assimilation are shown on the right. We note that all computations were performed in Matlab on a computer with 2.6 GHz Intel Core i7 processor and 16 GB of RAM.

\begin{figure}[ht]
	\includegraphics[width=0.48\textwidth]{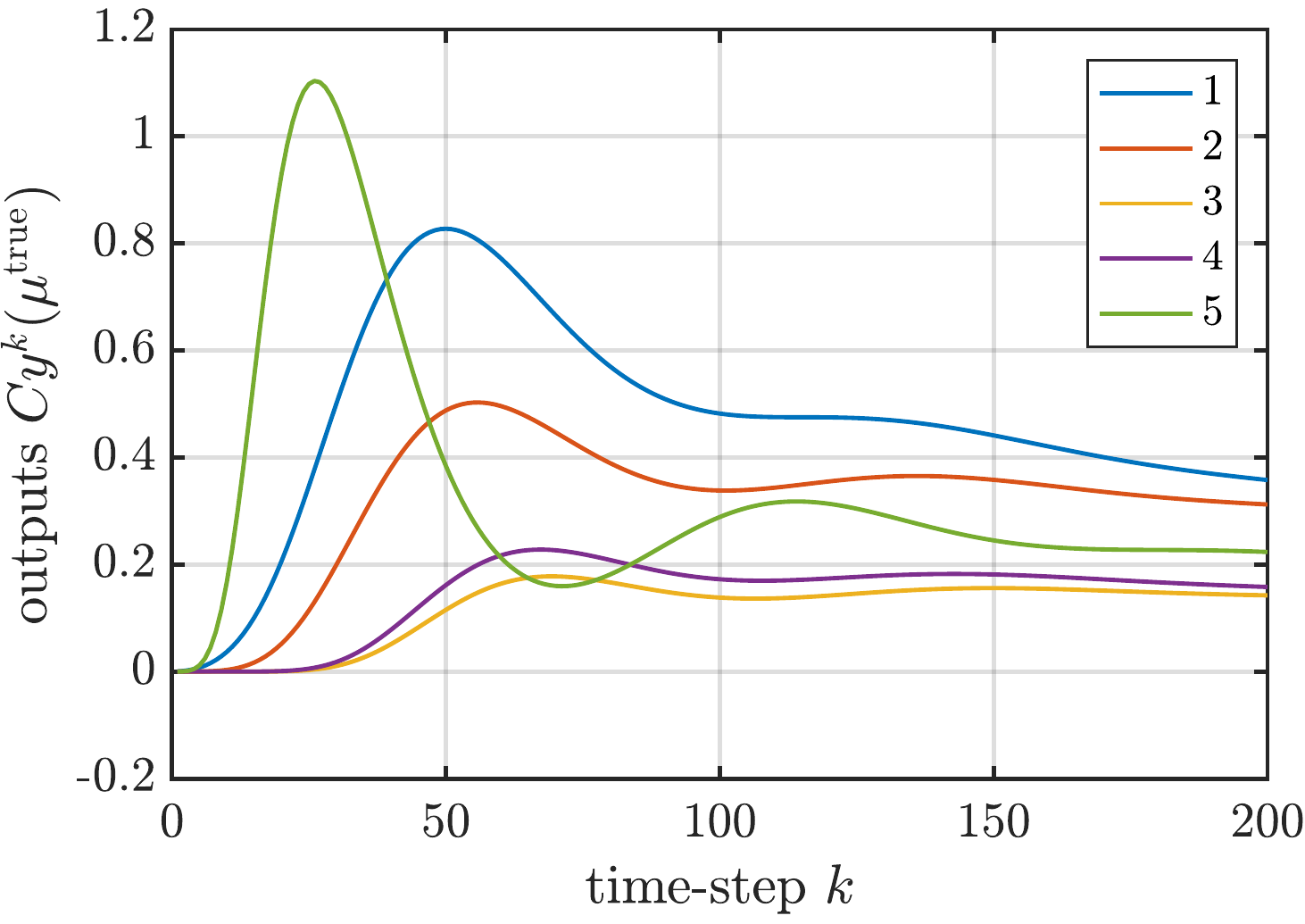} \hfill
	\includegraphics[width=0.48\textwidth]{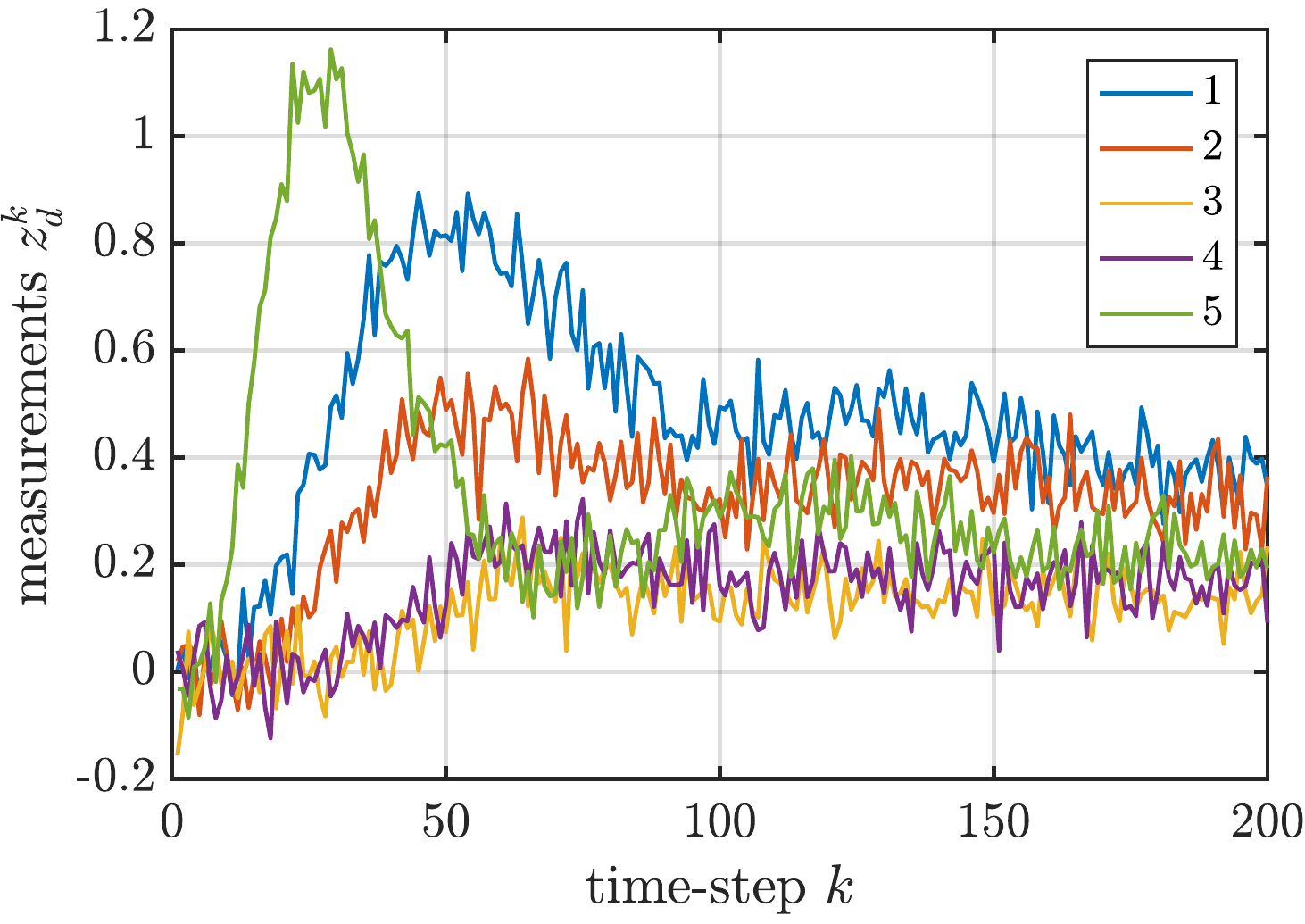}
	\caption{Outputs $C y^k(\mu^\textrm{true})$ and associated noisy output measurements $z_d^k$ over time.}
	\label{fig:outputs}
\end{figure}

\subsection{Reduced-order 4D-Var approach}
\label{ssec:rom4dvar}

We consider the strong- and weak-constraint 4D-Var data assimilation problem separately and present results for the performance of the reduced-order approach for each setting. We thus build different reduced basis spaces for the strong- and weak-constraint case by employing the Greedy sampling procedure described in Section~\ref{ssec:greedy_strong} and \ref{ssec:greedy_weak}, respectively. For both, we choose $\mu^\textrm{start} = 10$ and a training set consisting of 40 equidistant parameters over the parameter domain $\cD$. We also prescribe the number of Greedy iterations to $N_{\max} = 80$ (strong) and $N_{\max} = 100$ (weak) resulting in a relative error bound tolerance of approximately  $10^{-2}$. 

In Figure~\ref{fig:est_err_u_rel} we plot the maximum relative error and error bound over a test sample consisting of 20 randomly chosen parameters in $\cD$ versus the number of Greedy iterations $N$. The relative error and bound are defined as $\norm{u^*(\mu) - u_N^*(\mu)}_U/\norm{u^*(\mu)}_U$ and $\Delta_N^u(\mu)/\norm{u^*(\mu)}_U$ in the strong-constraint case, and by $\Big( \tau \sum_{k=1}^{K} \norm{u^{*,k}(\mu) - \uNoptk(\mu)}_U^2 \Big)^{1/2} /\Big( \tau \sum_{k=1}^{K} \norm{u^{*,k}(\mu)}_U^2 \Big)^{1/2}$ and $\tilde{\Delta}_N^u(\mu)/ \Big( \tau \sum_{k=1}^{K} \norm{u^{*,k}(\mu)}_U^2 \Big)^{1/2}$ in the weak-constraint case. We observe that the error and bound converge at the same rate and that the effectivities, i.e., the ratio of the bound and the error, thus remain almost constant over $N$. The mean effectivities over the test sample for $N_{\rm max}$ are $480$ in the strong-constraint case and $40$ in the weak-constraint case. We note that maximum dimensions of the reduced basis state/adjoint and control spaces are $N_{Y,{\rm max}} = 2 N_{\rm max} = 160$ and $N_{U,{\rm max}}^0 = 21$ (strong-constraint), and $N_{Y,{\rm max}} = 2 N_{\rm max} + 1 = 201$ and $N_{U,{\rm max}} = N_{\rmmax} = 100$ (weak-constraint). Especially in the strong-constraint case, we thus obtain a considerable reduction in the dimension of the control space from $\cN = {13,131}$ to $N_{U,{\rm max}}^0 = 21$. This will also be reflected in the required number of CG iterations to solve the reduced-order 4D-Var problem (see below).


\begin{figure}[ht]
	\includegraphics[width=0.48\textwidth]{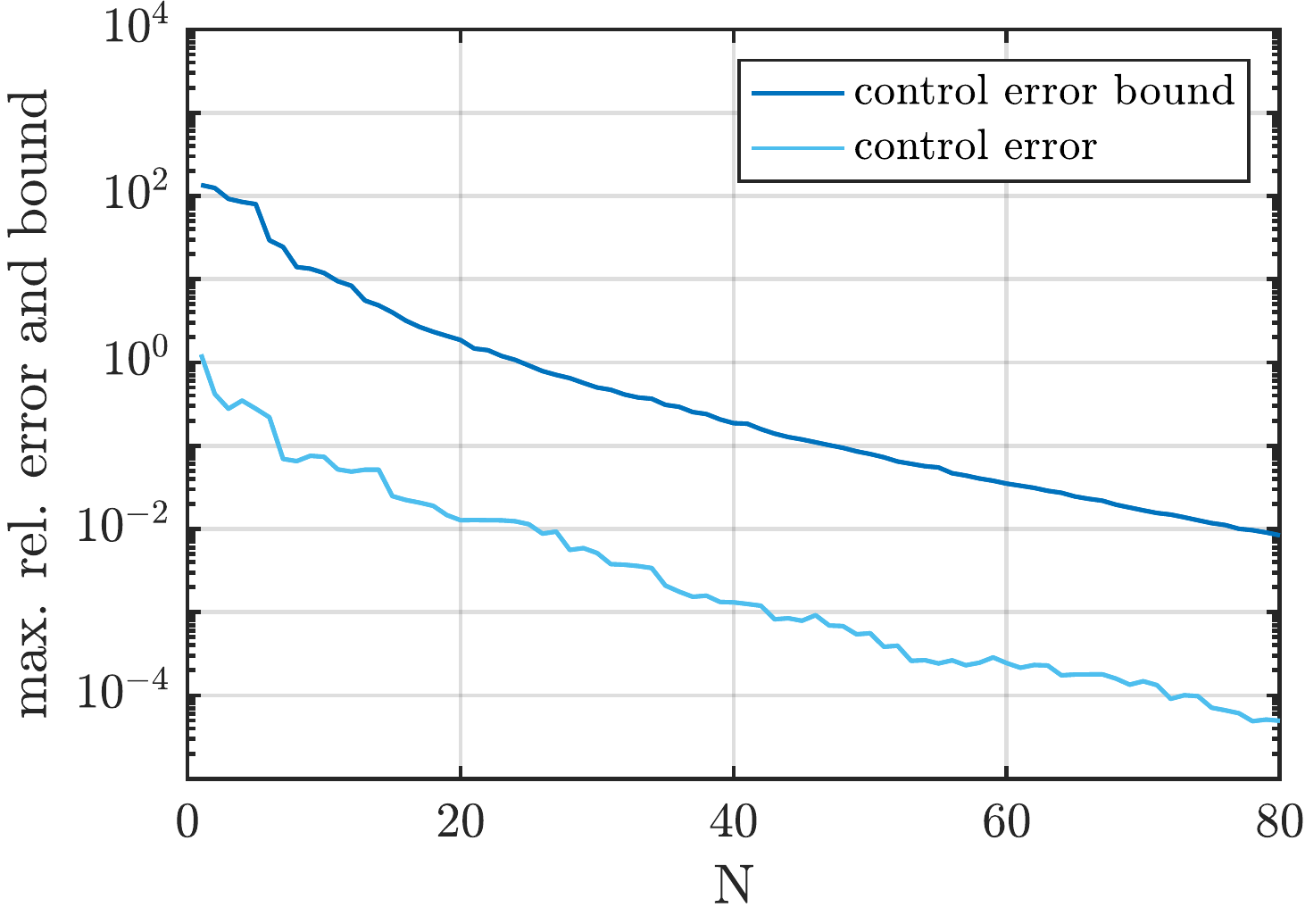} \hfill
	\includegraphics[width=0.48\textwidth]{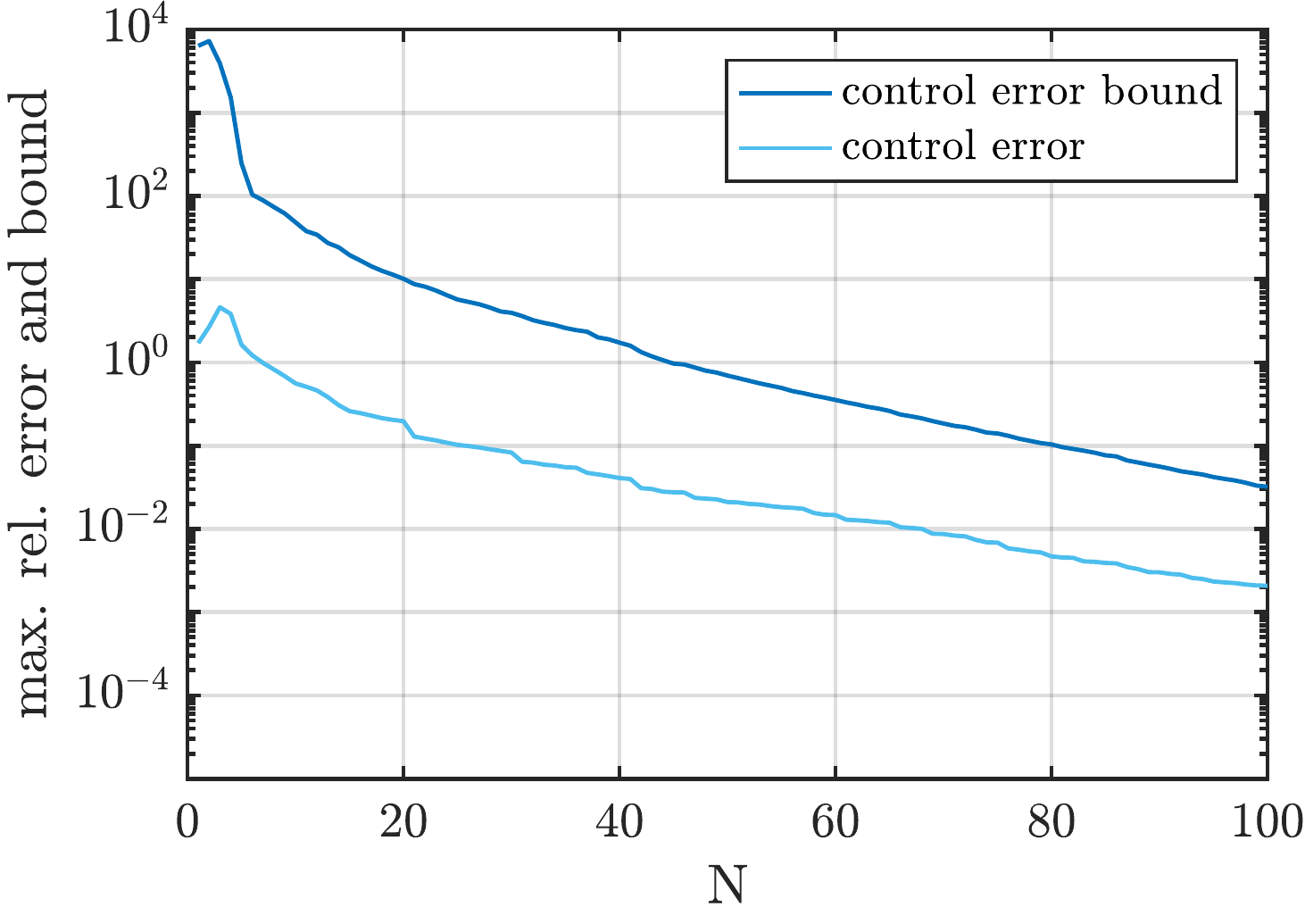}
	\caption{Maximum relative control error and error bound over number of Greedy iterations~$N$ for strong-constraint case (left) and weak-constraint case (right).}
	\label{fig:est_err_u_rel}
\end{figure}

We next report on the online computational times of our reduced-order approach. Similar to the full-order approach, the reduced-order solution times also depend on $\mu$ (smaller for $\mu = 10$ and higher for $\mu = 50$) and of course also strongly on $N$. We first consider the strong-constraint case:  the solution times for the reduced-order 4D-Var problem range from $10$ milliseconds to $1.37$ seconds, the evaluation of the \textit{a~posteriori} error bound $\Delta_N^u(\mu)$ takes between $2.8$ and $29$ milliseconds. We note that the computation of the error bound is much faster than the solution of the 4D-Var problem itself. Furthermore, we note that the computational time to evaluate the error bound only depends on $N$ and not on $\mu$ (i.e., evaluating the bound for fixed $N$ at $\mu = 10$ or $\mu = 50$ takes the same time). The overall online speed-up for $N = N_{\rm max}$ thus ranges from approximately $23$ to $40$. 

In the weak-constraint case, the solution times for the reduced-order 4D-Var problem range from $99$ milliseconds to $12.6$ seconds, the evaluation of the \textit{a~posteriori} error bound $\tilde{\Delta}_N^u(\mu)$ takes between $4.8$ and $71$ milliseconds. Again, the evaluation of the error bound is much faster than the solution of the 4D-Var problem itself. The online speed-up for $N = N_{\rm max}$ is now approximately $15$. 

\cbb{In order to illustrate the connection between the approximation error and the online solution time, we plot the average online solution time of the reduced-order 4D-Var problem versus the average relative error over the test sample in Figure~\ref{fig:comp_times}. Recall that the full-order solution takes approximately $30-54$ seconds for the strong-constraint case and $114-189$ seconds for the weak-constraint case.

\begin{figure}[ht]
	\includegraphics[width=0.48\textwidth]{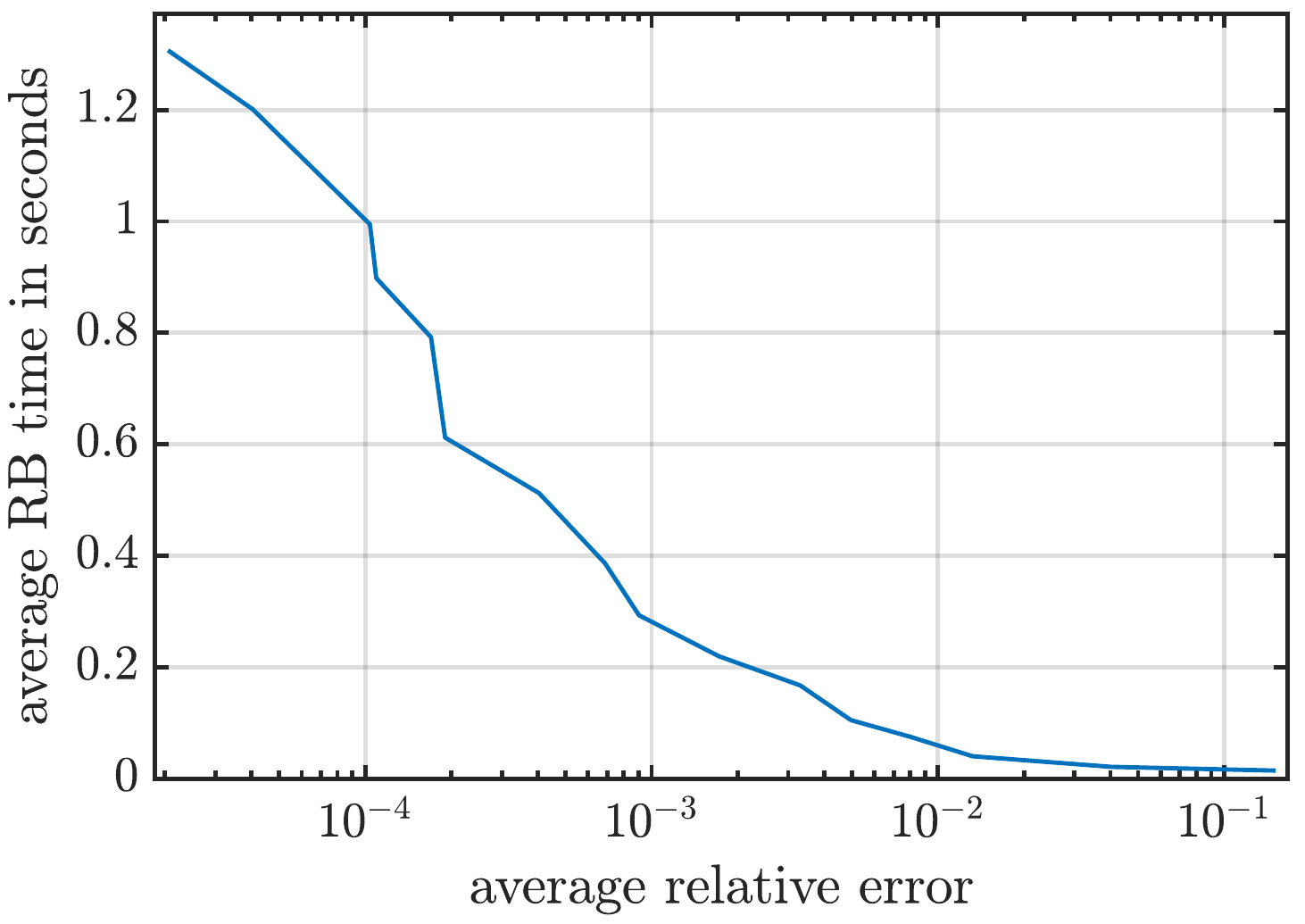} \hfill
	\includegraphics[width=0.48\textwidth]{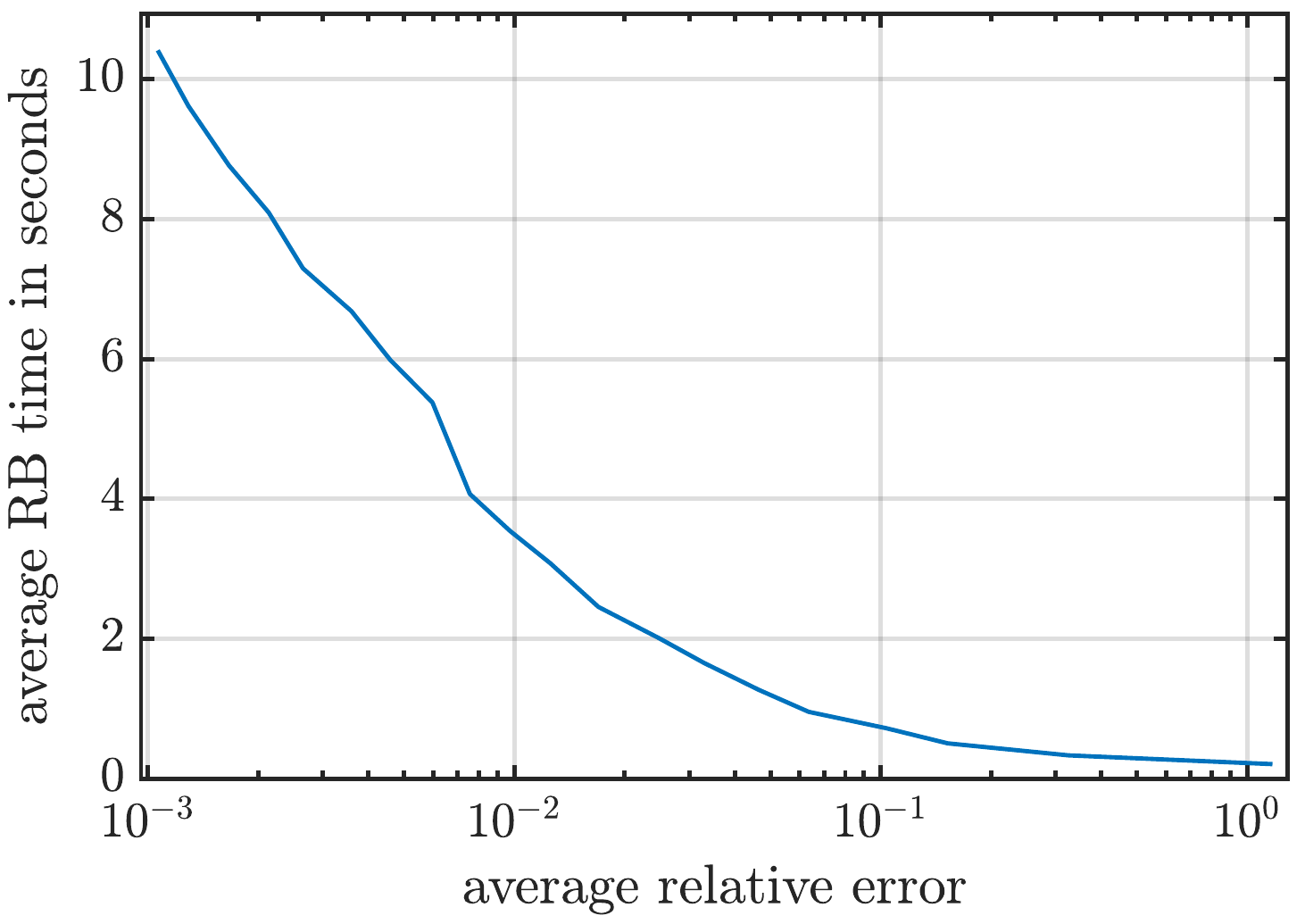}
	\caption{Average online solution time of the reduced-order 4D-Var problem over the average relative error for strong-constraint case (left) and weak-constraint case (right).}
	\label{fig:comp_times}
\end{figure}
}

%
%
%

%

We next show results for the number of CG iterations required to solve the reduced-order 4D-Var problem. In Figure~\ref{fig:cg_iters}, we plot the number of CG iterations as a function of the parameter $\mu$ for various values of $N$ and $N_U$ on the left for the strong-constraint case and on the right for the weak-constraint case. In the same plots, we also show the number of CG iterations required to solve the full-order problem. We observe a different behavior in the strong- and weak-constraint case. We first note that in the weak-constraint case the number of reduced-order CG iterations converges to the number of full-order CG iterations with increasing $N$. However, in the strong-constraint case the number of reduced-order CG iterations is bounded by $N_U^0$, which is significantly smaller than $N$. The number of reduced-order CG iterations are thus almost constant over $\mu$ for given $N$ and are considerably smaller than the number of full-order CG iterations even for $N = N_{\rm max}$.

\begin{figure}[ht]
	\includegraphics[width=0.48\textwidth]{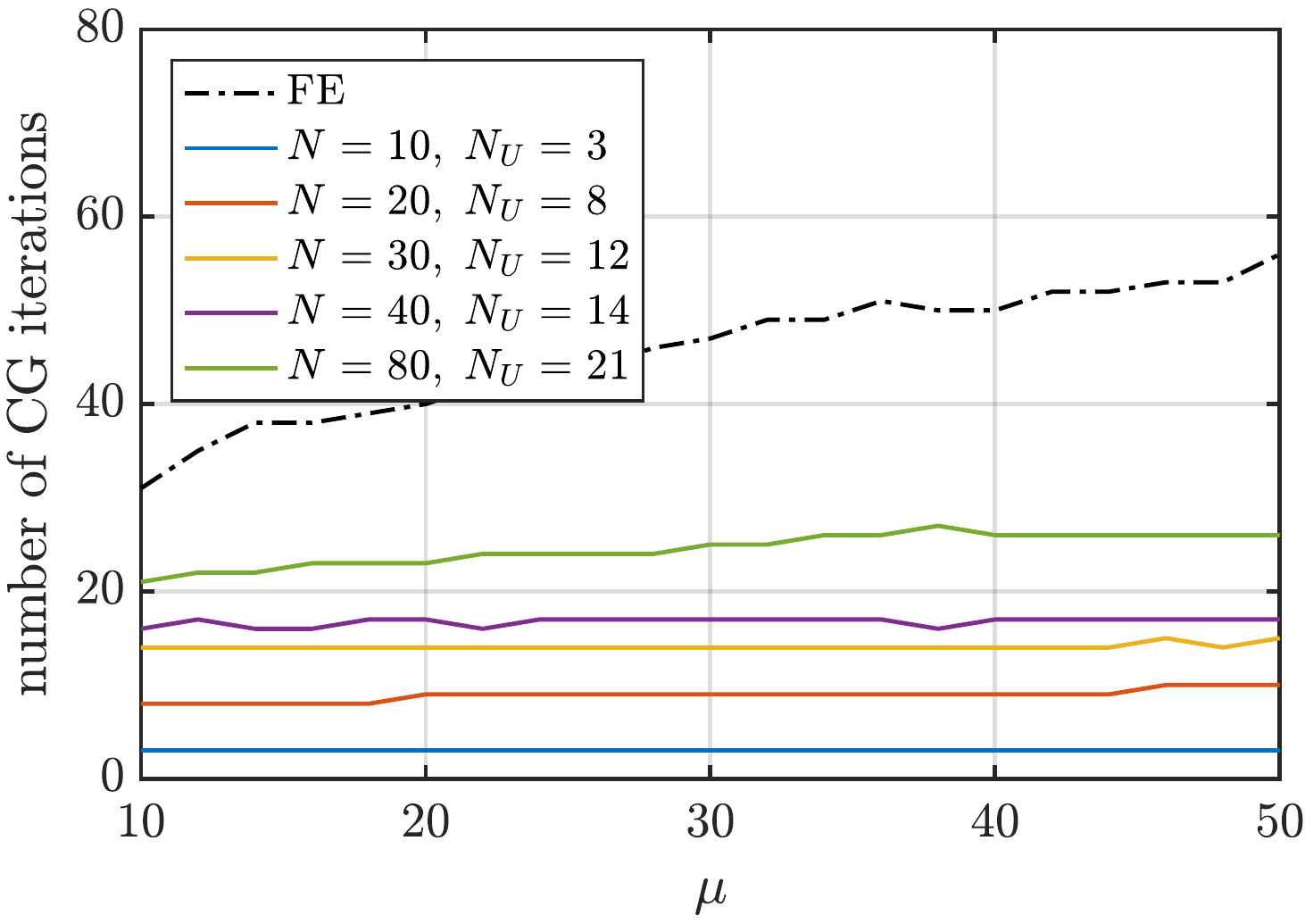} \hfill
	\includegraphics[width=0.48\textwidth]{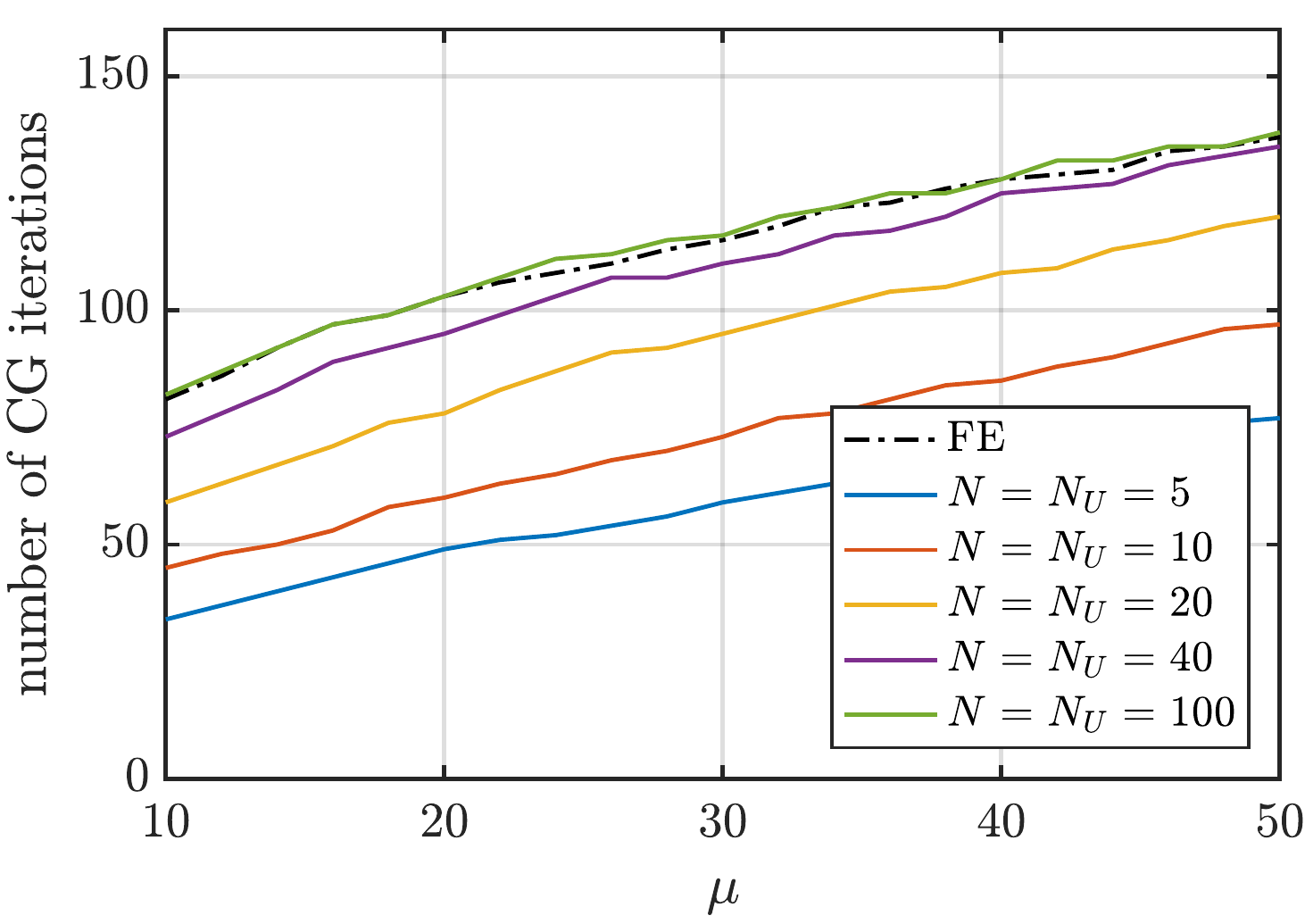}
	\caption{Required number of CG iterations for solving the full- and reduced-order 4D-Var problem in dependence of the parameter $\mu$ and the number of Greedy iterations $N$. Strong-constraint case (left) and weak-constraint case (right).}
	\label{fig:cg_iters}
\end{figure}

Finally, we consider the outer minimization problem and try to estimate the unknown true parameter $\mu^\textrm{true} = 30$ which lead to the noisy measurements. To this end, we define the ``optimal'' parameters $\mu^*$ and $\mu_N^*$ which minimize the full-order and reduced-order cost functionals 
\begin{equation}
	\mu^* = \argmin_{\mu \in \cD} J^*(\mu) \quad \text{and} \quad
	\mu_N^*= \argmin_{\mu \in \cD} J_N^*(\mu),
\end{equation}
respectively.  We compute the optimal estimated parameters $\mu^*$ and $\mu_N^*$ using the Matlab routine \texttt{fminbnd}, which only needs evaluations of the full-order and reduced-order cost functional. We also define the maximum relative cost functional error $e_{J,N}^{\max} = \max_{\mu \in \cD} |J^*(\mu) - J_N^*(\mu)|/|J^*(\mu)|$ and parameter error $e_{\mu,N} := |\mu^* - \mu_N^*|/|\mu^*|$. We present these errors for the strong- and weak-constraint case as a function of $N$ in Table~\ref{tab:err_J_err_mu}. We observe that in both cases the cost functional error and parameter error converge very fast, i.e., the reduced-order approach allows us to recover the optimal parameter $\mu^*$. We also note that the (full-order) optimal parameter is close to the true parameter in the strong-constraint case ($\mu^* = 29.67$ vs.\ $\mu^\textrm{true} = 30$), but that this is not true in the weak-constraint case ($\mu^* = 45.36$ vs.\ $\mu^\textrm{true} = 30$). Since $\mu_N^* \to \mu^*$ with increasing $N$, this is of course also true for --- and the best we can expect of --- the reduced-order optimal parameters.

\begin{table}
\caption{Error in cost functional and estimated parameter over number of Greedy iterations~$N$. Note that $\mu^* = 29.67$ (strong) and $\mu^* = 45.36$ (weak).}
\label{tab:err_J_err_mu}
\begin{center}
\begin{tabular}{rllll}
\hline\noalign{\smallskip}
N & $e_{J,N}^{\max}$ (strong) & $e_{\mu,N}$ (strong) & $e_{J,N}^{\max}$ (weak) & $e_{\mu,N}$ (weak) \\
\noalign{\smallskip}\hline\noalign{\smallskip}
 10  &  3.12e-01  &  4.18e-01  &  2.44e-01  &  6.02e-02 \\
 20  &  7.36e-03  &  1.30e-01  &  1.70e-02  &  9.33e-03 \\
 30  &  8.22e-04  &  1.42e-03  &  3.51e-03  &  1.70e-04 \\
 40  &  1.24e-04  &  4.99e-04  &  6.37e-04  &  3.26e-04 \\
 50  &  1.14e-05  &  2.98e-05  &  2.05e-04  &  3.53e-05 \\
 60  &  4.36e-06  &  1.27e-05  &  9.70e-05  &  3.90e-05 \\
 70  &  3.92e-07  &  4.18e-06  &  3.58e-05  &  1.93e-05 \\
 80  &  8.76e-08  &  9.71e-08  &  1.05e-05  &  4.12e-06 \\
 90  &  -         &  -         &  4.17e-06  &  2.51e-06 \\
100  &  -         &  -         &  1.94e-06  &  3.09e-06 \\
\noalign{\smallskip}\hline
\end{tabular}
\end{center}
\end{table}

\section{Conclusion}
\label{sec:conc}

In this paper, we considered the strong- and weak-constraint 4D-Var data assimilation problem. We presented a reduced-order approach to the 4D-Var problem based on the reduced basis method and proposed rigorous and efficiently evaluable \textit{a~posteriori} error bounds for the optimal control, i.e., the initial condition in the strong-constraint setting and the model-error forcing in the weak-constraint setting. For both instances we showed numerical results confirming the validity of the proposed approach. We also presented theoretical results for the combined case with unknown initial condition \textit{and} model-error forcing. 

We note that although we consider a parametrized problem here, the error bounds can also be used in the non-parametrized reduced-order setting and are independent of how the reduced-order spaces are constructed. The bound thus directly applies to reduced-order approaches where the spaces are constructed, e.g., using empirical orthogonal functions, POD, or dual-weighted POD~\cite{DN2008}. We also believe that the error bounds can be gainfully applied in a multi-fidelity approach to solve the 4D-Var problem, e.g., in a trust-region approach as proposed in~\cite{CNF2011,DNZ+2013}. 

Although we also presented results for the error in the cost functional and for estimating the unknown model parameter, we currently cannot provide rigorous and sharp \textit{a~posteriori} error bounds for these quantities. \cbb{Furthermore, we only considered a fixed setting for the noise level and regularization parameter here, a detailed analysis of the influence of these parameters on the performance of the reduced order model has not been performed.}  These are topics of current and future research in our groups.

\cbb{
\appendix
\section{Continuous 4D-Var Formulation}
\label{sec:cont}

The strong-constraint 4D-Var problem for a linear parabolic PDE on $(0,T)\times\Omega$, with $\Omega$ a Lipschitz domain, 
\begin{equation}
\label{eq0}
\partial_ty + Ay = f, \quad y|_{\partial\Omega}=0, \qquad y(0)=u,
\end{equation}
classically rewrites as the optimal control problem:
$$
\text{Find } (y^*,u^*) \in \mathop{\rm arginf}_{ (y,u)\in\mathcal{Y}\times U 
\text{ satisfies \eqref{eq0} } } J
$$
with a lower semi-continuous cost functional 
\begin{equation}
\label{cost_strong}
 J = \frac\lambda2 \norm{u-u_d}^2_U + \frac12 \int_0^T \norm{Cy-z_d}^2_D
\end{equation}
on the 
tensor-product of
$ \mathcal{Y} 
:= \{y\in L^2(0,T;H^1_0(\Omega)); \partial_t y\in L^2(0,T;H^{-1}(\Omega))\}$
and 
$U 
:= L^2(\Omega)$.
If the 
observation operator $C$ has a unique continuation in $\mathcal{Y}$,
$J$ is \emph{coercive} and strictly convex.
Then, if $f\in L^2((0,T)\times\Omega)$ so the set of admissible states is non-empty, there exists a unique solution, see e.g.~\cite{Fursikov2000}.
To characterize and compute the solution, one can use duality techniques following~\cite{PBG+1964} or~\cite{ET1976}.
%
On introducing a Lagrange multiplier $(p^*,v^*)\in \mathcal{Y}\times U$, $p^*(T)= 0$ for the constraint, it is classical that the solution should satisfy 
\cite{MS2002}
\begin{subequations}
\begin{align}
(u^*,\psi)_U-(\lambda v^*,\psi)_U & =(u_d,\psi)_U \ \forall \psi\in U
\\
(p^*(0),\varphi)_Y - (v^*,\varphi)_U & = 0
\\
(Cy^*,C\varphi)_D-(\partial_tp^*-A^Tp^*,\varphi)_Y & =(Cz_d,C\varphi)_D \ \forall \varphi\in \mathcal{Y}
\\
(y^*(0),\phi)_Y - (u^*,\phi)_U& = 0
\\
(\partial_ty^* + Ay^*,\phi)_Y & = (f,\phi)_Y \ \forall \phi \in \mathcal{Y}
\end{align}
\end{subequations}
which is a well-posed saddle-point problem, well-approximated by the discretization~\refeq{OS_fe_strong} \cite{EG2010}, again on the condition that the observation operator $C$ has a unique continuation in $\mathcal{Y}$.
Note that first adequately discretizing $J$  then leads to exactly the same discrete Euler-Lagrange equations as~\refeq{OS_fe_strong}.

The weak-constraint 4D-Var problem is also classical, see e.g. \cite{Fursikov2000}. 
The optimal control problem becomes
$$
\text{Find } (y^*,u^*) \in \mathop{\rm arginf}_{ (y,u)\in\mathcal{Y}_{y_0}\times\mathcal{U}
\text{ satisfies \eqref{eq1_seb} } } J
$$
for
\begin{equation}
\label{eq1_seb}
\partial_ty + Ay = f + Bu, \quad y|_{\partial\Omega}=0, \qquad y(0)=y_0,
\end{equation}
with the lower semi-continuous cost functional 
\begin{equation}
\label{cost_weak}
 J = \frac12 \int_0^T \norm{u-u_d}^2_U + \frac12 \int_0^T \norm{Cy-z_d}^2_D
\end{equation}
on the 
tensor-product of
$ \mathcal{Y}_{y_0} 
:= \{y\in L^2(0,T;H^1_0(\Omega)); \partial_t y\in L^2(0,T;H^{-1}(\Omega)); y(0)=y_0 \}$
and 
$\mathcal{U}:= L^2((0,T)\times\Omega)$;
the saddle-point becomes
\begin{subequations}
\begin{align}
(u^*,\psi)_U-(B^Tp^*,\psi)_Y & =(u_d,\psi)_U \ \forall \psi\in U
\\
(Cy^*,C\varphi)_D-(\partial_tp^*-A^Tp^*,\varphi)_Y & =(Cz_d,C\varphi)_D \ \forall \varphi\in \mathcal{Y}
\\
(\partial_ty^* + Ay^*,\phi)_Y  - (Bu^*,\phi)_Y & = (f,\phi)_Y \ \forall \phi \in \mathcal{Y}
\end{align}
\end{subequations}
while existence and uniqueness of a soluion still hold under the same conditions.
}


\bibliographystyle{spmpsci}      
\bibliography{../../Tex/bibtex/bib/All_References}

\end{document}